\DeclareMathOperator{\sol}{div}					
\DeclareMathOperator{\curl}{curl}		
\newcommand{\norm}[1]{\left\|#1\right\|}   
\newcommand{\ex}[1]{\mathrm{e}^{#1}}		  
\newcommand{\derivuj}[2]{\frac{\partial{#1}}{\partial{{#2}}}}
\newcommand{\derit}[1]{\derivuj{#1}{t}}
\newcommand{\derits}[1]{\partial_{t}{\left(#1\right)}}
\newcommand{\deritt}[1]{\frac{\partial^2{#1}}{\partial t^2}}
\newcommand{\abs}[1]{\left\lvert#1\right\rvert}        
\newcommand{\abso}[1]{\biggl\lvert#1\biggr\rvert}      
\newcommand{\field}[1]{\mathbb{#1}}
\def\det{\partial_t}
\def\R{\field{R}}      
\def\N{\field{N}}
\def\mfi{\varphi}
\def\sfi{\psi}
\def\pfi{\phi}
\def\S{\field{S}} 
\def\B{\mathcal{B}}
\def\Te{\mathcal{T}}
\newcommand{\rntou}[1]{\R^{#1}}    
\def\carka{\sp{\prime}}
\def\rtri{\rntou{3}}
\def\es{S^1}
\def\Id{\field{I}} 
\def\epsil{\varepsilon}
\newcommand{\vektor}[1]{{\mathbf{#1}}}
\def\ro{\varrho}
\def\teta{\vartheta}
\def\en{\vektor{n}}
\def\bfi{\boldsymbol\mfi}
\def\Phib{\boldsymbol\Phi}
\def\kve{\vektor{q}}
\def\ef{\vektor{f}}
\def\u{\vektor{u}}
\newcommand{\de}[1]{\mathrm{d}{#1}}     
\def\dt{\, \de{t}}	
\def\dx{\, \de{x}}	
\newcommand{\inth}[1]{\int\limits_{\partial\Omega}{#1}\, \de{S}  }
\newcommand{\ins}[1]{\int\limits_{\es}{#1}\dt  }
\newcommand{\inte}[1]{\int\limits_{\Omega}{#1}\dx  }
\newcommand{\INT}[1]{\ins{\inte{#1}}}
\def\INTb{\int\limits_{\es}\int\limits_{\Omega} }
\newcommand{\INTpsi}[1]{\ins{\inte{\left(#1\right)}\:\psi\: }}
\newcommand{\INTpsib}[1]{\ins{\inte{#1}\:\psi\: }}
\newcommand{\refx}[1]{(\ref{#1})}
\def\qqquad{\quad\quad\quad}
\newcommand{\elka}[2]{L^{#1}\left( \es; L^{#2}(\Omega)\right)}
\newcommand{\elkas}[2]{L^{#1}\left(L^{#2}\right)}
\def\sobo{W^{1,2}_0(\Omega)}
\def\sob{W^{1,2}(\Omega)}
\def\sobdp{W^{2,p}(\Omega)}
\def\soblps{W^{1,p}(\es\times\Omega)}
\def\sobdps{W^{2,p}(\es\times\Omega)}
\newcommand{\lebs}[1]{L^{#1}(\es\times\Omega)}
\newcommand{\Cw}[1]{C_{\mathrm{weak}}(\es,L^{#1}(\Omega)) }
\def\cinf{C^{\infty}}
\def\ksi{\zeta}
\def\wi{\vektor{w}^i}
\def\wj{\vektor{w}^j}
\def\unt{\vektor{\tilde{u}}_N}
\def\Nt{N_t}
\def\Nx{N_{x}}
\def\tteta{\tilde{\teta}}
\def\te{\tilde{e}}
\def\wiN{\left\{\wi\right\}_{i=1}^N}
\def\LwiN{ Lin\wiN }
\def\EN{E_{N}}
\def\ENt{E_{N}(t)}
\def\Ed{E_{\delta}}
\def\Edt{E_{\delta}(t)}
\newcommand{\fra}[2]{{#1}/{#2}}
\def\weak{ \rightharpoonup}
\def\weaks{\weak^{*}}
\def\strong{ \rightarrow}
\def\embed{\hookrightarrow}
\renewcommand{\emph}[1]{\textit{#1}}
\theoremstyle{plain}
\newtheorem{thm}{Theorem}
\newtheorem{lem}{Lemma}
\newtheorem{prop}{Proposition}
\newtheorem*{note}{Remark concerning the models}
\newtheorem*{LSFPT}{Schauder fixed point theorem}
\newtheorem*{notka}{Remark}
\newcommand{\elnor}[2]{\norm{#1}_{#2}} 
\newcommand{\elnorm}[3]{\elnor{#1}{\elkas{#2}{#3} } }
\title{Time-periodic solutions to the full Navier--Stokes--Fourier system with radiation on the boundary}
\author{\v{S}imon Axmann, Milan Pokorn\'{y}\\
\small{Charles University in Prague}\\
\small{Faculty of Mathematics and Physics}\\
\small {Mathematical Institute of Charles University} \\
\small{Sokolovsk\'a 83, 186 75 Praha 8, Czech Republic} \\
\small{e-mails: {\tt axmann.simon@seznam.cz}, {\tt pokorny@karlin.mff.cuni.cz}}}
\begin{document}

\maketitle

\vspace*{1cm}

\textbf{Abstract:} The Navier-Stokes-Fourier system is a well established model for describing the motion of viscous compressible heat-conducting fluids. We study the existence of time-periodic weak solutions and improve the result from \cite{FeTP} in the following sense: we extend the class of pressure functions (i.e. consider lower exponent $\gamma$) and include also the effect of radiation on the boundary.

\vspace*{1cm}

\section{Introduction}

We consider the following system of partial differential equations for three unknowns: the fluid density $\ro$, the velocity field $\u$, and the absolute temperature $\teta$; the succeeding identities represent the balance of mass, the balance of linear momentum, the balance of entropy and the conservation of the total energy, respectively.

\begin{gather}
	\derit{\ro} + \sol(\ro \u) = 0, \label{RK} \\ 
	\derit{(\ro\u)} + \sol(\ro \u\otimes\u) + \nabla p(\ro,\teta)=\sol\S(\teta,\nabla\u)+\ro\ef  , \\
	\derit{\bigl(\ro s(\ro,\teta)\bigr)} + \sol\bigl(\ro s(\ro,\teta)\u\bigr) +\sol\left( \frac{\kve(\teta,\nabla\teta)}{\teta} \right)=\sigma ,\\
	\frac{\de { } }{\dt}	\inte{ \left(    \frac{1}{2}\ro\abs{\u}^2 + \ro e(\ro,\teta) \right) } = \inte {\ro \ef\cdot \u} - \inth{\kve\cdot\en} .\label{TE}
\end{gather}
In order to close the system, we have to specify the so-called constitutive relations. The viscous stress $\S$ is assumed to satisfy the Stokes law for Newtonian fluid
\begin{equation}
\S (\teta,\nabla\u) = \mu(\teta)\left( \nabla\u +\nabla^T\u- \frac{2}{3}\sol\u \Id \right) +\eta(\teta)\sol\u\Id,
\end{equation}
where the shear viscosity coefficient $\mu(\teta)$ is a globally Lipschitz function satisfying $ 0<\underline{\mu} (1+\teta) \leq\mu(\teta)$  and the bulk viscosity coefficient $\eta(\teta)$ is a continuous function satisfying $0\leq \eta(\teta) \leq \overline{\eta} (1+ \teta)$. 
The heat flux $\kve$ satisfies Fourier`s law
$\kve(\teta,\nabla\teta)=-\kappa(\teta)\nabla\teta,$ with the heat conductivity coefficient $\kappa(\teta)$, $0<\underline{\kappa}(1+\teta^3)\leq \kappa(\teta)\leq \overline{\kappa}(1+\teta^3) .$ 
The thermodynamical quantities: the pressure $p$, the specific entropy $s$, and the specific internal energy $e$ are specified so that they satisfy the Gibbs relation \begin{equation}\label{Gibbs}\teta Ds(\ro,\teta) = De(\ro,\teta)+ p(\ro,\teta)D(1/\ro).\end{equation} For the purpose of avoiding additional technicalities we assume\footnote{We can also deal with more general constitutive relations for thermodynamical quantities analogous to those from \cite{FeTP}, but it will not bring anything new, except for additional technical complications, thus we do not consider them here.}
\begin{align}
	p(\ro,\teta) =& \ro^\gamma + \ro\teta + \frac{a}{3}\teta^4 ,\\
	e(\ro,\teta) =& \frac{1}{\gamma-1} \ro^{\gamma-1} + c_v\teta + \frac{a}{\ro}\teta^4 ,\label{energ}\\
	s(\ro,\teta) =& \ln\left( \frac{\teta^{c_v}}{\ro}\right) + \frac{4a}{3\ro}\teta^3 ,
\end{align}
where $\gamma$, and $c_v$ are positive constants. The possible values of $\gamma$ will be specified later.
Last, but not least $\sigma$ represents the entropy production rate
$\sigma = \frac{1}{\teta}\left( \S(\teta,\nabla\u):\nabla\u +  \frac{\kve(\teta,\nabla\teta)}{\teta}\cdot\nabla\teta  \right),$ whose non-negativity has to be assumed due to  The Second Law of Thermodynamics.

The fluid is contained in a smooth bounded domain $\Omega$, we assume the following boundary conditions
\begin{gather}
\left.\u\right|_{\partial\Omega}=\vektor{0},\\
	\kve\cdot\en = d(x,\teta)(\teta - \Theta_0), \label{tokh}
\end{gather}
where  $0<\underline{\Theta_0}\leq \Theta_0(x)\in L^{1}(\partial\Omega)$\footnote{We could also allow the function $\Theta_0(x, t)$ to be dependent on time in a time-periodic way, but we omit it.} represents the temperature of the boundary, and for the heat conductivity coefficient $d(x,\teta)$ we will consider two different cases:
 \begin{gather}
		d\text{ dependent on }\teta\text{ satisfying }\nonumber\\
  d_0(1+\teta^3)<d(x,\teta)< C(1+\teta^3),\:d_0>0\label{dep}
\end{gather}
 \begin{gather}
		d\text{ independent of }\teta\text{ satisfying }\nonumber\\
  0<d_0\leq d(x)\leq C<\infty. \label{indep}
\end{gather}
 
 Let us note that according to The Second Law of Thermodynamics, the existence of nontrivial time periodic flow within the energetically closed system is impossible. Hence a condition similar to \refx{tokh} which admits a heat flux through boundary is actually necessary to have the opportunity to get nontrivial solution to our problem.
The fluid is driven by a time-periodic external force $\ef\in L^{\infty}(\R^1\times\Omega,\rtri)$, $\ef(t+L,\cdot)=\ef(t,\cdot) ,\:\forall t\in\R.$

As in the case of the incompressible Navier-Stokes equations\cite{Lera33}, the existence of classical solutions to the system under consideration is far from being obvious. Therefore, we will concern only weak solutions, id est infinite families of integral identities rather than the point-wise satisfaction of equations \refx{RK}--\refx{TE}. The Navier-Stokes-Fourier system enjoys in this context substantial interest of mathematicians; the evolutionary as well as the steady case were studied with various boundary conditions and constitutive relations, see e.g. \cite{Feir04, singular,MuPo09,JeNoPo13}. Recently, the existence of the time-periodic solutions was proved in \cite{FeTP}. However, only the case $\gamma = \frac{5}{3}$ has been treated there, without the radiation on the boundary.\footnote{See also \cite{Axma13} for $\gamma>1$ in the 2D case.}

The structure of the article is as follows. Firstly, we will introduce the concept of the weak solution and present our main result. Secondly, we will show the a-priori estimates for the solutions on purely heuristic level, which is motivation for our definition of weak solutions, and at the same time it is the core of the technical part of the proof, to which the rest of the paper is devoted. We will introduce the approximation scheme, show the existence of approximative solutions and then pass to the limit. Finally, the a-priori bounds in the case without radiation on the boundary allowing one to treat more pressure dependences than in \cite{FeTP}, will be presented in the Appendix.

\subsection{Definition of the solution}

As we search for the time-periodic solutions with period $L$, we will consider all quantities defined on the time interval $\es = \left.[0,L]\right|_{\{0,L\}}]$, accompanied with the periodicity condition $$g(.,0)=g(.,L).$$
   
We call a triple $\{ \ro,\u,\teta \}$ a time-periodic variational entropy solution to the Navier-Stokes-Fourier system, if the following holds true\footnote{Note we are not able to exclude possible vacuum areas.}
\begin{gather}
\ro \geq 0 ,\quad \teta>0 \text{ almost everywhere,} \label{reg}\\
\ro\in \elka{\infty}{\gamma},\:\u\in L^{2}(\es;W^{1,2}_0(\Omega;\R^3)),\\
\teta\in\elka{\infty}{4},\quad\teta^{3/2},\log{\teta} \in L^{2}(\es;\sob)\\
\teta\in L^{4}(\es\times\partial\Omega) , \text{ or }\teta\in L^{1}(\es\times\partial\Omega),\text{ respectively,}
\end{gather}
the renormalized equation of continuity is satisfied for all $b\in C^{1}[0,\infty)$, and any $\psi\in C^{\infty}(\es\times\overline{\Omega})$
\begin{equation}\ins{\inte{ \left(b(\ro)\derit{\psi} + b(\ro)\u\cdot\nabla\psi+\bigl( b(\ro)-b\carka(\ro)\ro \bigr) \sol\u\psi\right) }}=0,\label{rce}
\end{equation}
the momentum equation is satisfied in the sense of distributions, id est for all $\bfi\in C^{\infty}(\es\times{\Omega},\rtri)$ with $\bfi = \vektor{0}$ at $\partial \Omega$
\begin{multline}
	\ins{\inte {  \left(   \ro\u\derit{\bfi}+(\ro\u\otimes\u):\nabla\bfi + p(\ro,\teta)\sol\bfi  \right)   }}\\= 	\ins{\inte {  \left( \S(\teta,\nabla\u):\nabla\bfi - \ro\ef\cdot\bfi  \right) }}, \label{me}
\end{multline}
the specific entropy satisfies for all $\psi\in C^{\infty}(\es\times\overline{\Omega})$
\begin{multline}
	\ins{\inte {  \left(   \ro s(\ro,\teta)\derit{\psi}+\ro s(\ro,\teta)\u\cdot\nabla\psi + \frac{\kve(\teta,\nabla\teta)}{\teta}\cdot\nabla\psi \right)   }}\\= 	\ins{\inth {  \frac{d(\teta-\Theta_0)}{\teta} \psi }} - \left\langle \sigma , \psi\right\rangle,\label{eb}
\end{multline}
where the production of entropy $\sigma$ is represented by a non-negative measure $\sigma\in\mathcal{M}^{+}(\es\times\overline{\Omega})$ satisfying
\begin{equation} \sigma\geq	\frac{1}{\teta}\left( \S(\teta,\nabla\u):\nabla\u -  \frac{\kve(\teta,\nabla\teta)}{\teta}\cdot\nabla\teta  \right), \end{equation}
and the balance of total energy, for all $\psi\in C^{\infty}(\es)$
\begin{equation} 
\begin{array}{c} 
\displaystyle \ins {\biggl(  \derit{\psi}  \inte{\Bigl(  \frac{1}{2}\ro\abs{\u}^2+\ro e(\ro,\teta)  \Bigr) }   \biggr) } \\
\displaystyle =  \ins {{\psi} \Bigl(   \inth { d(\teta- \Theta_0) } - \inte{\ro\ef\cdot\u}   \Bigr) }.\label{te}
\end{array}
\end{equation}

In what follows, we use abridged notation for the norms; more specifically, $\|\cdot\|_{L^p(L^q)}$ means $\|\cdot\|_{L^p(S_1;L^q(\Omega))}$; similarly $\|\cdot\|_{L^p(W^{1,q})}$.  

\section{Main result and a-priori bounds}

\subsection{Main results}

\begin{thm}
Let $\Omega\subset\rtri$ be a bounded domain with a $C^{2+\nu}$ boundary. Assume that the above mentioned hypotheses are all satisfied with 
\begin{align}
		\gamma> & \frac{23}{15}\text{, and }   d \text{ satisfying \refx{indep}.}\hfill
\end{align}
Then for any $M_0>0$ there exists at least one variational entropy time-periodic solution to the Navier-Stokes-Fourier system such that \begin{equation}\inte{\ro(t,\cdot)}=M_0.\label{mass}\end{equation}
\end{thm}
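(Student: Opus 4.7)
The strategy is the now-standard multi-level approximation scheme for the Navier--Stokes--Fourier system of Feireisl and Novotn\'y, adapted to the time-periodic setting in the spirit of~\cite{FeTP}. I would introduce three regularization parameters: a Galerkin truncation of size $N$ for the velocity equation, a parabolic regularization $\epsilon\Delta\ro$ of the continuity equation with a homogeneous Neumann datum, and an artificial pressure term $\delta(\ro^\beta+\ro^2)$ with $\beta$ large. For each frozen $(N,\epsilon,\delta)$ one would construct a time-periodic solution by a Schauder fixed-point argument on the period interval $\es$, and then pass successively to the limits $N\to\infty$, $\epsilon\to 0+$, $\delta\to 0+$.

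\textbf{Step 1: approximate time-periodic solutions.} At the approximation level I would freeze a candidate Galerkin velocity $\u\in C(\es;\LwiN)$ and solve, first, the regularized continuity equation in its renormalized form~\refx{rce} augmented by $\epsilon\Delta\ro$, obtaining a unique periodic $\ro[\u]$ of prescribed mass $M_0$; then an approximate internal energy equation (in place of the entropy equation, which is only available in the limit) for a strictly positive periodic $\teta[\u,\ro]$; and finally the Galerkin projection of the momentum equation, which becomes a finite-dimensional system of ODEs on $\es$. A Schauder fixed-point argument in the coefficients of $\u$, based on a~priori bounds coming from the approximate total-energy balance and the regularized entropy production, produces a time-periodic triple $(\ro_N,\u_N,\teta_N)$. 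The new feature compared with~\cite{FeTP} is the boundary term $\inth{d(\teta-\Theta_0)}$ from~\refx{tokh}, which is linear in $\teta$ under hypothesis~\refx{indep} and therefore harmless at the fixed-point level.

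\textbf{Step 2: uniform bounds and first two limits.} Testing the approximate entropy equation with the constant function and using time periodicity cancels the time derivative and, combined with the time-integrated total energy balance~\refx{te} and the mass constraint~\refx{mass}, produces the fundamental bounds
\begin{gather*}
\sup_{t\in\es}\inte{\Bigl(\tfrac12\ro|\u|^2+\ro e(\ro,\teta)\Bigr)} + \ins{\inte{|\nabla\u|^2}} \le C,\\
\ins{\inte{\bigl(|\nabla\log\teta|^2+|\nabla\teta^{3/2}|^2\bigr)}} + \ins{\inth{d\,\teta}} \le C.
\end{gather*}
From these, Korn's and Poincar\'e's inequalities yield the regularity class listed in~\refx{reg}ff. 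A Bogovskii-type test function in the momentum equation then produces the improved pressure estimate $\ro\in L^{\gamma+\alpha}(\es\times\Omega)$ for some $\alpha>0$, which is the essential ingredient for the subsequent passages. The limit $N\to\infty$ is routine by compactness in the finite-dimensional Galerkin space, and $\epsilon\to 0+$ employs the Lions--Feireisl effective viscous flux identity together with the oscillation defect measure to upgrade the continuity equation to its renormalized form~\refx{rce} and to obtain strong convergence of $\ro_\epsilon$.

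\textbf{Main obstacle.} The decisive step is the final limit $\delta\to 0+$ in the low-$\gamma$ regime $\gamma>\tfrac{23}{15}$, where one must simultaneously show that the artificial pressure $\delta(\ro^\beta+\ro^2)$ vanishes, re-establish strong convergence of $\ro_\delta$ via the effective viscous flux identity, and control the convective term $\ro|\u|^2$ in the momentum equation. The threshold $\gamma>\tfrac{23}{15}$ appears precisely here: interpolating between the $L^\infty(\es;L^\gamma(\Omega))$ bound on $\ro$ and the $L^2(\es;L^6(\Omega))$ bound on $\u$ from the Sobolev embedding, together with only an $L^1(\es\times\partial\Omega)$ temperature trace (since $d$ is bounded and independent of $\teta$ in~\refx{indep}, one does not gain the $L^4$ boundary control that~\refx{dep} would provide), $\tfrac{23}{15}$ turns out to be the lowest exponent for which all integrability requirements in the effective viscous flux analysis and in the passage to the limit in the pressure term simultaneously close. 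Once this is done, the entropy inequality~\refx{eb} follows from weak lower semicontinuity (the dissipation densities are convex in the gradient variables), the non-negativity of $\sigma$ as a measure is preserved, and the triple $(\ro,\u,\teta)$ is the desired variational entropy time-periodic solution satisfying~\refx{mass}.
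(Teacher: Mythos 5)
Your high-level roadmap — Galerkin truncation $N$, parabolic regularization $\epsilon\Delta\ro$, artificial pressure $\delta(\ro^\Gamma+\ro^2)$, a Schauder fixed-point argument on $\es$, Bogovskii pressure estimates, the effective viscous flux identity and the oscillations defect measure — is indeed the paper's route, and the sequence of limits $N\to\infty$, $\epsilon\to 0^+$, $\delta\to 0^+$ is correct. Two points are missing or wrong, one minor, one fatal to the argument as you have stated it.

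\textbf{The fatal gap: the source of the exponent $\frac{23}{15}$.} You read the hypothesis literally as $d$ bounded and $\teta$-independent (\refx{indep}), deduce only an $L^1(\es\times\partial\Omega)$ trace for $\teta$, and then assert that ``$\frac{23}{15}$ turns out to be the lowest exponent for which all integrability requirements close.'' This is exactly backwards and the computation does not close at $\frac{23}{15}$ under that hypothesis. With only $\teta\in L^1(\es\times\partial\Omega)$ the Poincar\'e inequality gives merely $\teta\in L^1(\es;L^9(\Omega))$, and running the Bogovskii argument with this weak time integrability yields the threshold $\gamma>\frac{8}{5}$ --- this is precisely what the Appendix establishes for the bounded-$d$ case. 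The improved exponent $\frac{23}{15}$ rests crucially on the Stefan--Boltzmann radiation condition $d\sim 1+\teta^3$ (\refx{dep}): setting $\psi\equiv 1$ in the entropy balance then produces $\teta\in L^2(\es\times\partial\Omega)$ on the left, the energy balance produces the $L^4(\es\times\partial\Omega)$ trace, and the Poincar\'e inequality lifts this to $\teta\in L^3(\es;L^9(\Omega))$; see \refx{tempr}--\refx{veloc}. It is this stronger time integrability, fed into \refx{veloc}, \refx{teticka}, \refx{strycek} and the estimate \refx{domu} of the convective term, that produces the quadratic inequality $15A^2+A(5-30\gamma)+33\gamma-23<0$ whose solvability demands exactly $\gamma>\frac{23}{15}$. (You should also note that the references to \refx{dep} and \refx{indep} appear transposed in the statements of Theorems 1 and 2 relative to the proofs: the $\frac{23}{15}$ theorem is proved with the $\teta$-dependent $d$ of \refx{dep}, and the $\frac{8}{5}$ theorem with the bounded $d$ of \refx{indep}. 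Your proposal inherited the transposed label without noticing that the resulting argument cannot reach $\frac{23}{15}$.)

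\textbf{A secondary omission specific to the time-periodic setting.} The approximation scheme here uses five parameters, not three: in addition to $N$, $\epsilon$, $\delta$, there is a second-order-in-time regularization $-\tau\partial_{tt}\Phi(\ln\teta)$ of the Kirchhoff-transformed internal energy equation, and a further parameter $\zeta$ multiplying $\partial_t\u_N$ and $\partial_t\teta$, see \refx{CEf}--\refx{ENf}. Neither is needed for the Cauchy problem, but they are essential here: the $\tau$-term makes the time-periodic internal energy equation elliptic on $\es\times\Omega$ (so that Proposition~\ref{EEfx} applies with positivity of $\teta$ automatically built in via $\teta=\ex{\Phi^{-1}(Z)}$), and $\zeta$ secures integrability in time of $\u$ and $\teta$ even in vacuum regions. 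Your sketch passes over the construction of the positive periodic temperature and the corresponding uniform bounds; without the Kirchhoff/$\tau$ device it is unclear how the fixed-point map is even well defined in the periodic setting.
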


\begin{thm}
Let $\Omega\subset\rtri$ be a bounded domain with a $C^{2+\nu}$ boundary. Assume that the above mentioned hypotheses are all satisfied with 
\begin{align}
		\gamma> & \frac{8}{5}\text{, and }   d \text{ satisfying \refx{dep}.}\hfill
\end{align}
Then for any $M_0>0$ there exists at least one variational entropy time-periodic solution to the Navier-Stokes-Fourier system such that \begin{equation}\inte{\ro(t,\cdot)}=M_0.\label{massa}\end{equation}
\end{thm}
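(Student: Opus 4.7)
The plan is to follow the multi-level approximation scheme of Feireisl--Novotn\'y--Petzeltov\'a adapted to the time-periodic framework of \cite{FeTP}. The approximate problem carries three parameters: a Galerkin truncation of dimension $N$ for the velocity, an artificial viscosity $\epsilon\Delta\ro$ regularising the continuity equation (with Neumann boundary data and a corrected convective flux), and an extra pressure $\delta(\ro^\beta+\ro^2)$ with $\beta$ large, coupled with the internal-energy formulation of the temperature equation in place of \refx{eb}. Existence of a time-periodic solution at level $(N,\epsilon,\delta)$ will be secured by a Schauder fixed-point argument on a convex subset of $C(\es;\R^N)\times L^2(\es;\sob)$: given $(\u,\teta)$, the density $\ro$ is recovered as the unique time-periodic solution of the regularised transport--diffusion equation with prescribed mean $M_0$, and the fixed-point map returns the next Galerkin velocity and temperature. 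The three parameters are then removed in the usual order $N\to\infty$, $\epsilon\to 0$, $\delta\to 0$.

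Since no initial datum is available, all uniform estimates must come from periodicity-averaged identities. Integrating the entropy inequality over $\es$ eliminates the time derivative and yields
\begin{equation*}
\ins{\inte{\frac{\S(\teta,\nabla\u):\nabla\u}{\teta}}}+\ins{\inte{\frac{\kappa(\teta)\abs{\nabla\teta}^2}{\teta^2}}}+\ins{\inth{\frac{d(x,\teta)\Theta_0}{\teta}}}\leq\ins{\inth{d(x,\teta)}},
\end{equation*}
and integrating the total energy balance gives $\ins{\inth{d(x,\teta)(\teta-\Theta_0)}}=\ins{\inte{\ro\ef\cdot\u}}$. Combined with the mass constraint $\inte{\ro(t,\cdot)}=M_0$ and the radiation assumption \refx{dep}, so that $d\sim 1+\teta^3$, these identities yield $\teta\in L^4(\es\times\partial\Omega)$, $\nabla\u/\sqrt{\teta}\in L^2(\es\times\Omega)$, and $\teta^{3/2},\log\teta\in L^2(\es;\sob)$. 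Korn's and Poincar\'e's inequalities then promote the velocity estimate to $\u\in L^2(\es;\sobo)$, while interpolation with the $L^\infty(L^4)$ control of $\teta$ implicit in the internal-energy term yields interior $L^p$ bounds on $\teta$ for some $p$ noticeably larger than $4$.

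The main obstacle is the limit $\delta\to 0$, in which strong convergence of the density must be established. The plan is to improve pressure integrability by testing the momentum equation with the Bogovskii potential $\B(\ro^\theta-\overline{\ro^\theta})$ for a suitable exponent $\theta>0$; since no initial slice is available, the test function is built entirely within the periodic setting. Matching the temperature integrability obtained above against the powers of $\ro$ in $p(\ro,\teta)=\ro^\gamma+\ro\teta+\frac{a}{3}\teta^4$ leads to $\ro^{\gamma+\theta}\in L^1(\es\times\Omega)$ and $\ro\teta\in L^q(\es\times\Omega)$ for some $q>1$, and the threshold $\gamma>8/5$ is precisely what the resulting system of exponent inequalities requires in the presence of the boundary radiation. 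With equi-integrability of the pressure secured, the Feireisl--Lions effective viscous flux identity together with the oscillation defect measure delivers strong convergence of $\ro$ in $L^1$. Strong convergence of the boundary trace of $\teta$ follows from the compact embedding $\sob\embed L^{5/2}(\partial\Omega)$ applied to $\teta^{3/2}$, so that \refx{eb} passes to the limit monotonically with the production $\sigma$ realised as a non-negative Radon measure on $\es\times\overline{\Omega}$, and the boundary integrals in \refx{te} converge as well. The remaining passages in the continuity, momentum and total-energy identities then follow the now-standard Feireisl--Novotn\'y scheme.
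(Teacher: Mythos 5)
Your overall scaffolding---time-periodic Schauder fixed point on the regularised system, periodicity-averaged entropy and energy identities, Bogovskii pressure estimates, effective viscous flux and oscillation defect measure---matches the paper's strategy. The difficulty is in the a-priori bounds, and it traces back to a label swap in the printed theorem: the theorem with threshold $\gamma>8/5$ is actually the case \emph{without} radiation on the boundary, i.e.\ $d$ satisfying \refx{indep}, while the radiation condition \refx{dep} belongs to Theorem~1 with the sharper threshold $\gamma>23/15$. This is unambiguous from the body of the paper: Section~2.2 (radiation on the boundary) derives $\gamma>23/15$ in \refx{restriction}, whereas the Appendix (without radiation) is what proves the present theorem and arrives at $\gamma>8/5$. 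Your argument, however, is built entirely on the radiation structure---you extract $\teta\in L^4(\es\times\partial\Omega)$ from $d\sim1+\teta^3$ and assert that $\gamma>8/5$ is ``precisely'' the resulting threshold---and both claims conflict with the paper. With radiation the exponent arithmetic actually gives $\gamma>23/15<8/5$, so in effect you are re-deriving a weakened version of Theorem~1 rather than proving the theorem at hand, and $8/5$ is not the sharp number for that computation.

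In the $d$-bounded case the estimates are structurally different and you need to redo them. From $\psi\equiv1$ in the entropy balance, the right-hand side $\ins{\inth{d}}$ is bounded outright, so $\elnor{\u}{L^2(W^{1,2}_0)}$, $\elnorm{\nabla\teta^{3/2}}{2}{2}$ and $\elnorm{\nabla\log\teta}{2}{2}$ are bounded \emph{immediately}, with no density on the right-hand side. Conversely, the integrated total energy balance now only controls $\teta$ in $L^1(\es\times\partial\Omega)$ (not $L^4$), so Poincar\'e gives merely $\teta\in L^1(\es;L^9(\Omega))$, controlled by $\elnorm{\ro}{2}{6/5}$. It is this weaker temperature integrability---not the radiation one---that drives the exponent bookkeeping: after interpolating $\elnorm{\ro}{2}{6/5}$ against $\elka{\infty}{1}$ and $L^{a\gamma}(\es\times\Omega)$ with $a=(5\gamma-3)/(3\gamma)$, and absorbing $\ins{\inte{\teta^4}}\lesssim\elnorm{\teta}{1}{4}\sup\limits_{t\in\es}E^{3/4}(t)$, one gets $\sup_{t\in\es}E\lesssim 1+\elnor{\ro}{L^\gamma(\es\times\Omega)}^{\gamma}+\elnor{\ro}{L^{a\gamma}(\es\times\Omega)}^{2a\gamma/3(a\gamma-1)}$, and closing this by Young requires $a\gamma>5/3$, i.e.\ $\gamma>8/5$. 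The Bogovskii test function then has exponent $\gamma(a-1)=(2\gamma-3)/3$, rather than whatever $\theta$ is dictated by the $L^4$ boundary estimate. Your outline needs to be rebuilt around this weaker configuration to actually prove this theorem rather than a consequence of the other one.
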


We will present here only the proof of Theorem 1, since the proof of the second one simply copies \cite{FeTP}, except the a-priori bounds given in Appendix.

\begin{notka}
 Note  $\frac{3}{2}<\frac{23}{15}<\frac{8}{5}<\frac{5}{3}$, thus in both cases we deal with more general pressure laws than the known result \cite{FeTP},  and further, in the model with radiation on the boundary we are only $\frac{1}{30}$ above the ``optimal" exponent\footnote{For lower exponents we are not able to bound the kinetic energy in Bogovskii estimates and thus to give a sense to the convective term.} $\gamma =\frac{3}{2}$.
\end{notka}

\begin{note}
 The heat flux $\kve$ satisfies Fourier`s law
$\kve(\teta,\nabla\teta)=-\kappa(\teta)\nabla\teta,$ with the heat conductivity coefficient $\kappa(\teta)$, $0<\underline{\kappa}(1+\teta^3)\leq \kappa(\teta)\leq \overline{\kappa}(1+\teta^3)$ taking into account Stefan-Boltzmann-type radiation, therefore it is natural to take analogous condition also on the boundary. From purely mathematical point of view, the advantage of this choice is that we are be able to deduce better time integrability of the temperature on the boundary $(\teta\in{L^{4}(\es\times\partial\Omega)})$, and consequently also inside the domain due to the Poincar\'{e} inequality. On the other hand we will have to identify the limit for the additional nonlinearity in this model.
\end{note}

\subsection{A-priori bounds in the case of radiation on the boundary}
Before the technical part of the proof we will present formal a-priori estimates, in order to explain the main ideas of the paper.

\subsubsection{Energy estimates}
Our first observation is that the conservation of mass \refx{RK} yields 
\begin{align}
	\ro\in\elka{\infty}{1}. \label{conserv}
\end{align}
Further, we can put $\psi\equiv 1$ in the entropy balance equation to get
\begin{gather}
  \ins{\inte   {\left( \frac{\S(\teta,\nabla \u):\nabla \u}{\teta}   + \frac{\kappa(\teta)\abs{\nabla\teta}^2}{\teta}\right)}}   \nonumber\\+ \ins{\inth {  \frac{d(\teta)\Theta_0}{\teta}  }} \leq \ins{\inth {  d(\teta)}}. 
\end{gather}
Hence, using the form of $d$, $\S$, and the Korn inequality (see e.g. \cite{singular})
\begin{multline}
	\elnor{\u}{L^{2}(\sobo) }^2+ 	\elnorm{\nabla(\teta^{\frac{3}{2}})}{2}{2}^2 + \elnorm{\nabla(\ln\teta)}{2}{2}^2 \\+\elnor{\frac{1}{\teta}}{L^{1}(\es\times\partial\Omega)} +
	\elnor{\teta}{L^{2}(\es\times\partial\Omega)}^2
	\leq C(1+ \elnor{\teta}{L^{3}(\es\times\partial\Omega)}^3) .\label{tempr}
\end{multline}
Integrating the total energy balance over the whole time period, we deduce that
$$\ins { \inth { d(\teta-\Theta_0)}} \leq \ins{ \inte  { \ro \ef\cdot\u} }, $$
id est,
\begin{equation}
\begin{split}
\ins{ \inth  {   (\teta + \teta^4) } }\leq& C \abs{\ins{ \inte  { \ro \ef\cdot\u} }}+C \label{forcing}, \\
\leq& C\left( 1 + \elnorm{\ro}{2}{\frac{6}{5}}  \elnorm{\u}{2}{6} \right).
\end{split}
\end{equation}
Estimating the right-hand side of \refx{tempr} by means of \refx{forcing} we get

\begin{align}
	\elnorm{\teta}{3}{9}^3  + \elnorm{\u}{2}{6}^2\leq C \left( 1 + \elnorm{\ro}{2}{\frac{6}{5}} ^{\frac{6}{5}}  \right),
\end{align}
where we have used the fact that by virtue of the Poincar\'{e} inequality 
\begin{multline*} 
\elnorm{\teta}{3}{9}^3 = \elnorm{\teta^{\frac{3}{2}}}{2}{6}^2 \leq C\left\|\teta^{\frac{3}{2}}\right\|_{L^{2}(\sobo) }^2\\
\leq C\left(\elnor{\teta}{L^{3}(\es\times\partial\Omega)}^3 +	\elnorm{\nabla(\teta^{\frac{3}{2}})}{2}{2}^2  \right).
\end{multline*}
 Further, since interpolation with $\elka{\infty}{1}$ yields
 \begin{align}
	 \elnorm{\ro}{2}{\frac{6}{5}}\leq C \biggl[   1+ \biggl( \ins{\Bigl( \inte{\ro^\gamma} \Bigr)^{\frac{1}{3(\gamma-1)}} }\biggr)^{\frac{1}{2}}   \biggr],
\end{align}
we get
\begin{align}
\elnorm{\teta}{3}{9} &\leq C \biggl[1+ \biggl( \ins{\Bigl( \inte{\ro^\gamma} \Bigr)^{\frac{1}{3(\gamma-1)}} }\biggr)^{\frac{1}{5}}\biggr],\label{tempre} 
	 \\ \elnorm{\u}{2}{6} &\leq C \biggl[   1+ \biggl( \ins{\Bigl( \inte{\ro^\gamma} \Bigr)^{\frac{1}{3(\gamma-1)}} }\biggr)^{\frac{3}{10}}   \biggr].\label{veloc} 
\end{align}

If we denote the total energy by $E(t)=\inte{\bigl(\frac{1}{2} \ro\abs{\u}^2+\ro e(\ro,\teta) \bigr)}$, we get from its balance, with the usage of the structural properties of the internal energy and \refx{forcing} that for all $t_1,\:t_2\in\es$
\begin{align}
	E(t_1)-E(t_2)\leq C\biggl(1+ \ins{ E(t)} \biggr),\nonumber\\
\sup\limits_{t\in\es}E(t)\leq C \biggl(1+ \ins{ E(t)} \biggr).\label{supTE}
\end{align}
From \refx{conserv} and \refx{veloc} we can bound the kinetic energy
  \begin{equation}
  \begin{split}
	  \ins{ \inte{\frac{1}{2} \ro\abs{\u}^2 } } \leq&\:C\elnorm{\u}{2}{6}^2\elnorm{\ro}{\infty}{\frac{3}{2}} \\
	  \leq&\:C  \biggl[   1+ \biggl( \ins{\Bigl( \inte{\ro^\gamma} \Bigr)^{\frac{1}{3(\gamma-1)}} }\biggr)^{\frac{3}{5}} \elnorm{\ro}{\infty}{\gamma}^{\frac{\gamma}{3(\gamma-1)}} \biggr] \\
	  \leq&\:C \left(1+\sup\limits_{t\in\es}E(t)^{ \frac{1}{5(\gamma-1)}+\frac{1}{3(\gamma-1)} }\right)	 ,
	  \end{split}
	\end{equation}
	since $ \frac{1}{5(\gamma-1)}+\frac{1}{3(\gamma-1)}<1$ for $\gamma>\frac{23}{15}$ we can absorb the term on the right-hand side using Young's inequality.	Hence we get for the total energy
\begin{align}	\sup\limits_{t\in\es}E(t)\leq &\:C \biggl(1+  \ins{ \inte { \ro e(\ro,\teta) }  }  \biggr)\\
\leq &  C \biggl(1+  \ins{ \inte {  (\ro^\gamma + \ro\teta + \teta^4)    }  }  \biggr).
\end{align}
The first term on the right-hand side will be left as it is. The last term will be estimated using \refx{tempre} as follows
\begin{equation}
\elnorm{\teta}{4}{4}^4\leq \elnorm{\teta}{3}{9}^3\elnorm{\teta}{\infty}{4} \leq C\left(1+\elnorm{\ro}{\gamma}{\gamma}^\frac{\gamma}{5(\gamma-1)} \sup\limits_{t\in\es}E^{1/4}(t)\right) ,\label{teticka}
\end{equation}
which can be absorbed to the left-hand side by means of Young's inequality, since $\frac{4}{3}\cdot\frac{\gamma}{5(\gamma-1)}<\gamma$ for our range of $\gamma$'s.
Similarly, we have
\begin{equation}
\begin{split}
	\ins{ \inte{ \ro\teta } } \leq& \elnorm{\ro}{\frac{3}{2}}{\frac{9}{8}} \elnorm{\teta}{3}{9} \\ 
	\leq&  C\left( 1+ \elnorm{\ro}{\gamma}{\gamma}^{\frac{\gamma}{9(\gamma-1)}+\frac{\gamma}{15(\gamma-1)} } \right)\\ 
	\leq&  C\left( 1+ \elnorm{\ro}{\gamma}{\gamma}^{\frac{8\gamma}{45(\gamma-1)} } \right),\label{strycek}
	\end{split}
\end{equation}
thus using the estimates above
\begin{align}
	\sup\limits_{t\in\es}E(t)\leq C \biggl(1+  \elnorm{\ro}{{\gamma}}{{{\gamma}}}^{\gamma} \biggr).\label{super}
\end{align}

\subsubsection{Pressure estimates}
It remains to deduce suitable estimates of density, which will be done by testing the momentum equation by\footnote{We use notation   $\left\{g\right\}_{\Omega}=\frac{1}{\abs{\Omega}}\int_{\Omega}{g}\:\dx $. }
$$\Phib = \B\left[  \ro^{\gamma(a-1)}-\left\{ \ro^{\gamma(a-1)}\right\}_{\Omega}  \right],$$
where $a>1$ will be specified later, and $\B \sim \sol^{-1}$ is the Bogovskii operator (see e.g. \cite{singular}). We assume $\gamma(a-1)\leq 1$; due to the properties of the Bogovskii operator it follows that $\Phib\in L^{\infty}\left( \es;W^{\frac{1}{\gamma(a-1)}}(\Omega)\right),$ and $\left\{ \ro^{\gamma(a-1)}\right\}_{\Omega} \in L^{\infty}(\es)$, so we obtain

\begin{multline*}
	\ins{\inte {  \left(   p(\ro,\teta) \ro^{\gamma(a-1)}  \right)   }}\\
	\leq	\ins{\inte {  \left( -\ro\u\derit{\Phib}-(\ro\u\otimes\u):\nabla\Phib+ \S(\teta,\nabla\u):\nabla\Phib - \ro\ef\cdot\Phib  \right) }}\\
	 +C\ins{\inte {  \left((  \ro^{\gamma}+\ro\teta   +\teta^{4}  ) \left\{\ro^{\gamma(a-1)}\right\}_\Omega \right) }}. 
	\end{multline*}
	
The terms on the left-hand side of the inequality have sign and give the desired estimate of $\ro^{a\gamma}$, if the right-hand side will be estimated. We will present only the most difficult and restrictive terms. We start with the convective term, since it determines the possible values of $a$ and consequently also of $\gamma$.
\begin{equation}
\begin{split}
	&\ins{\inte { (\ro\u\otimes\u):\nabla\Phib} }\leq \elnorm{\u}{2}{6}^2\elnorm{\ro\nabla\Phib}{\infty}{{3}/{2}}\\ &\qqquad\quad  \leq 
	C\elnor{\ro}{L^{a\gamma}(L^{a\gamma})}^{\frac{a\gamma}{5(a\gamma-1)}} \elnorm{\ro}{\infty}{p}\elnorm{\nabla\Phib}{\infty}{\frac{p}{\gamma(a-1)}}\\
&\qqquad\quad  \leq \frac{1}{14}\elnor{\ro}{L^{a\gamma}(L^{a\gamma})}^{a\gamma} + C
	\left( \elnorm{\ro}{\infty}{p}\elnorm{\ro^{\gamma(a-1)}}{\infty}{\frac{p}{\gamma(a-1)}}\right)^{\frac{5a\gamma-5}{5a\gamma-6}},\label{domu}
	\end{split}
\end{equation}
where we have used an analogy of the estimate \refx{veloc} with $a\gamma$ instead of $\gamma$, the properties of the Bogovskii operator;  $p$  satisfies $\frac{2}{3} = \frac{1}{p}+\frac{\gamma(a-1)}{p}$, yielding $p=\frac{3}{2}\cdot\bigl(1+\gamma(a-1)\bigr)$.

Further, $\elnorm{\ro}{\infty}{p}\elnorm{\ro^{\gamma(a-1)}}{\infty}{\frac{p}{\gamma(a-1)}} = \elnorm{\ro}{\infty}{p}^{1+\gamma(a-1)},$
 and we would like to interpolate as follows
 $$  \elnorm{\ro}{\infty}{p} \leq \elnorm{\ro}{\infty}{\gamma}^\alpha\elnorm{\ro}{\infty}{1}^{1-\alpha}, $$
 therefore we need $p\in(1,\gamma)$, which leads to the first constraint on the possible values of $a$, namely \makebox{${a<\frac{5\gamma-3}{3\gamma};}$} if this condition is satisfied we get 
 $\alpha = \dfrac{\gamma}{\gamma-1}\cdot\dfrac{3\gamma a-3\gamma+1}{3\gamma a -3\gamma+3}.$ Thus, using \refx{conserv}
 \begin{equation}
\begin{split}
	&\ins{\inte { (\ro\u\otimes\u):\nabla\Phib} } \\
&\qqquad\quad  \leq \frac{1}{14}\elnor{\ro}{L^{a\gamma}(\es\times\Omega)}^{a\gamma} + C
	\left( \elnorm{\ro}{\infty}{p}^{\bigl( 1+\gamma(a-1) \bigr) } \right)^{\frac{5a\gamma-5}{5a\gamma-6}}\\
&\qqquad\quad  \leq \frac{1}{14}\elnor{\ro}{L^{a\gamma}(\es\times\Omega)}^{a\gamma} + C
	\left( \elnorm{\ro}{\infty}{\gamma}^{\alpha \cdot \bigl( 1+\gamma(a-1) \bigr) } \right)^{\frac{5a\gamma-5}{5a\gamma-6}}.
	\end{split}
\end{equation}
 The first term can be immediately push to the left-hand side, while for the second one, we need
 $$\frac{1}{\gamma}\cdot \alpha \cdot \bigl( 1+\gamma(a-1) \bigr)   \cdot{\frac{5a\gamma-5}{5a\gamma-6}}<1,$$
 which leads to the quadratic inequality for the quantity $A:=a\gamma$
$$
 (5a\gamma-5)\cdot(3a\gamma-3\gamma+1)<3(\gamma-1)\cdot(5a\gamma-6).
$$
Denoting $A= a \gamma$, we have
$$
15A^2+A(5-30\gamma) + 33\gamma-23<0,
$$
 the discriminant $D_A =  5\cdot(180\gamma^2-456\gamma+281) $ is definitely positive for all $\gamma>\frac{3}{2}$, so we have to ensure that 
 \begin{align}
  \frac{30\gamma-5-\sqrt{D_A}}{30}&<A<\frac{30\gamma-5+\sqrt{D_A}}{30}\text{, which means}\\
1 &<a<1+\frac{-5+\sqrt{D_A}}{30\gamma},
\end{align}
 since we consider only $a>1$. Therefore we need 
 \begin{align*}
 -5+\sqrt{D_A}>&0,\qquad \mbox{id est,}\\
 180\gamma^2-456\gamma+276>&0
 \end{align*}
 which yields again the restriction \begin{equation} \gamma>\frac{23}{15}.\label{restriction}\end{equation}
 Conversely, we are able to choose for each $\gamma$ satisfying \refx{restriction}, $a>1$ such that we can bound the convective term, namely\footnote{The first quantity is less than the second one for $\gamma>\frac{39}{25}$.}
 \begin{equation}1<a<\min\left\{\frac{5\gamma-3}{3\gamma} ,  1+\frac{-5+\sqrt{5\cdot(180\gamma^2-456\gamma+281)}}{30\gamma}, \frac{\gamma+1}{\gamma}   \right\}.\label{posvala}\end{equation}
 
 For the term with $\derit{\Phib}$, we will use the renormalized equation of continuity with $b(\ro) = \ro^{\gamma(a-1)}$; we obtain two terms, one could be estimated similarly as above, the other as follows (note that $\frac{6p}{7p-6} = \frac{2p}{p+2\gamma(a-1)}$ for $p = \frac{3}{2} (1+ \gamma(a-1))$)
\begin{equation*}
\begin{split}
&\ins{ \inte { \ro\u\B \left[ \ro^{\gamma(a-1)}\sol\u -\left\{\ro^{\gamma(a-1)}\sol\u \right\}_\Omega \right] } } 
 \\&\quad\quad\quad\:\leq \elnorm{\ro\u}{2}{\frac{6p}{p+6}} 
\elnorm{ \B \left[ \ro^{\gamma(a-1)}\sol\u -\left\{\ro^{\gamma(a-1)}\sol\u \right\}_\Omega \right]  } {2}{\frac{6p}{5p-6}}
\\&\quad\quad\quad\:\leq C\elnorm{\ro}{\infty}{p} \elnorm{\u}{2}{6} \elnorm{\ro^{\gamma(a-1)}\sol\u } {2}{\frac{2p}{p+2\gamma(a-1)}}
\\&\quad\quad\quad\:\leq C\elnorm{\ro}{\infty}{p} \elnorm{\u}{2}{6} \elnorm{\ro^{\gamma(a-1)} } {\infty}{\frac{p}{\gamma(a-1)}}
 \elnorm{\sol\u}{2}{2},
\end{split}
\end{equation*}
with the same $p$ as above.
 The right-hand side can be estimated again in the same way as in \refx{domu}.

While estimating the terms with the temperature, we will exploit inequality \refx{tempre}, which stems from presence of the radiation on the boundary. Similarly, as in \refx{teticka} and \refx{strycek}
\begin{equation}
\begin{split}
&\ins{\inte {  \left(\teta^{4}  \left\{\ro^{\gamma(a-1)}\right\}_\Omega \right) }}+	\ins{\inte { \ro\teta   \left\{\ro^{\gamma(a-1)}\right\}_\Omega  }}\\
	&\qqquad\leq C \left(1+\elnorm{\teta}{3}{9}^3\elnorm{\teta}{\infty}{4} + \elnorm{\ro}{\frac{3}{2}}{\frac{9}{8}}\elnorm{\teta}{3}{9}\right) \\
	&\qqquad\leq C \left(1+\sup\limits_{t\in\es}E(t)^{\frac{1}{4}+\frac{1}{5(\gamma-1)}} +  \sup\limits_{t\in\es}E(t)^{\frac{1}{9(\gamma-1)}+\frac{1}{15(\gamma-1)}}   \right),	
	\end{split}
\end{equation}
where the powers of the energy are less than one.
Finally
$$ \abs{\ins{\inte{ \ro\ef\cdot\u }}} \leq C \elnorm{\sqrt{\ro}\u}{\infty}{2} \elnorm{\sqrt{\ro}}{\infty}{2}\leq C\left(1+\sup\limits_{t\in\es}E(t)^{\frac{1}{2}}\right).$$

Thus, we get
$$  \ins{\inte {  \ro^{a\gamma}  }} \leq C \left(1+\sup\limits_{t\in\es}E(t)^{\beta}\right),$$
with some $\beta<1$. This estimate can be plugged into \refx{super} in order to get the bound
\begin{equation}\sup\limits_{t\in\es}E(t)<\infty.\label{superenergy}
\end{equation}
Moreover, due to the obtained estimates we can derive higher integrability of the temperature on the boundary. Namely, from \refx{tempr} and  \refx{superenergy} we have $$\teta^{\frac{3}{2}}\in L^{2}(\es;\sob),\qquad \teta\in\elka{\infty}{4} $$
and we can interpolate
\begin{align}
\elnor{\teta}{L^{13/3}(\partial\Omega)}^\frac{13}{3}=&\inth{\teta^{\frac{3}{2}\cdot\frac{26}{9}}}\leq C\elnor{\teta^{\frac{3}{2}}}{\sob}\left(\inte{ \teta^{\frac{3}{2}\cdot2\cdot(\frac{26}{9}-1)}}  \right)^{\frac{1}{2}}\\
\leq &  C\elnor{\teta^{\frac{3}{2}}}{\sob} \elnor{\teta}{L^{\frac{17}{3}}(\Omega)}^{\frac{17}{6}}
\leq   C\elnor{\teta^{\frac{3}{2}}}{\sob} \elnor{\teta}{L^{9}(\Omega)}^{\frac{3}{2}} \elnor{\teta}{L^{4}(\Omega)}^{\frac{4}{3}},\label{hranic}
\end{align}
\begin{align}
\elnor{\teta}{L^{13/3}(\es\times\partial\Omega)}^\frac{13}{3}\leq C \elnor{\teta^{\frac{3}{2}}}{L^{2}(\sob)}\elnorm{\teta}{3}{9}^{\frac{3}{2}} \elnorm{\teta}{\infty}{4}^{\frac{4}{3}}\leq C. \label{hranice}
\end{align}
Now, we are ready to start the proof of our main theorem in the case of the radiation on the boundary.


\section{Approximation}

\subsection{Approximation scheme}

Following \cite{FeTP}, we will approximate the original problem introducing five parameters, namely $N\in\N$ representing the dimension of the finite dimensional space for the velocity field, $\tau>0$, and $\ksi>0$\footnote{We will finally set $\ksi=\delta$. However, we keep this notation for the purpose of clarity.} introduced in order to get an information about the time integrability of the velocity, and temperature, respectively, even in the possible vacuum zones, $\epsil>0$ representing the parabolic regularization of the continuity equation, and last, but not least $\delta>0$ regularizing the pressure and heat flux in order to get higher integrability of the density and the temperature. Moreover, $\Gamma$ and $B$ denote sufficiently large positive numbers. We will search for $\ro\geq 0$, $\ro\in C^{\infty}\left(\es;\:\sobdp\right)$, $\u_N$ in some finite dimensional space, and $\teta>0$ such that $\ln\teta$, $\teta\in\sobdps$ for any $p<\infty$. Hence, we replace the original system by the following approximative version.

We add artificial diffusion and mass production into the continuity equation, and add a boundary condition in order to conserve the mass, denoting $m=\frac{M_0}{\abs{\Omega}}$
\begin{equation}
\begin{split}
\derit{\ro} + \sol(\ro \u_N) -\epsil \Delta \ro + \epsil \ro= \epsil m  \qquad \text{ in }\es\times\Omega, \label{CEf} \\
\frac{\partial \ro}{\partial \en} = 0 \qquad \text{ on }\es\times\partial\Omega.
\end{split}
\end{equation}

We modify the pressure and consider the Galerkin approximation in the momentum equation. For this purpose we introduce a finite-dimensional subspaces of $L^{2}(\es;\sobo)$ with basis, consisting of $\wi(t,x) = a^k(t)\vektor{b}^l(x) $, with $i=i(k,l)$, $i=1,\ldots,N$, which is orthonormal with respect to scalar product $\left(\wi,\wj\right) =\ins{\inte{\nabla\wi:\nabla\wj}}$. Here $a^k$ stands for orthonormal basis of goniometric functions, which are smooth and $L$-periodic,\footnote{For example $\cos\left(\frac{\pi (k+1)x}{L}\right)$, and $\sin\left(\frac{\pi kx}{L}\right)$ for $k$ odd, or even, respectively.} while $\vektor{b}^l$ forms orthonormal basis of $\sobo$, such that all its elements belong to $W^{2,p}(\Omega)$ for any $p<\infty$.

\begin{multline}
\ins{\inte {  \left(   \ksi\derit{\u_N} \cdot\wi+\derit{(\ro\u_N)}\cdot\wi-(\ro\u_N\otimes\u_N):\nabla\wi   \right)   }} \\
+ \ins{\inte {  \Bigl( \S(\teta,\nabla\u_N):\nabla\wi- \left(p(\ro,\teta) +\delta (\ro^\Gamma + \ro^2) \right)\sol\wi \Bigr)}}\label{MEf} \\
= 	\ins{\inte { \left( -\epsil\nabla\ro \cdot\nabla\u_N\wi + \frac{1}{2}\epsil(m-\ro)\u_N\cdot \wi +\ro\ef\cdot\wi  \right) }} 
\end{multline}

We transform the regularized internal energy equation, which we see as an equation for the temperature, by means of the so called Kirchhoff transform
\begin{equation}\label{KirT}
\Phi(g)=\displaystyle\int\limits_0^g  \bigl(\kappa(\ex{z})\ex{z} + \delta \ex{(B+1)z} + \delta \bigr) \de{z}.
\end{equation}
Note that, since the integrand of the integral is measurable and greater than $\delta$, $\Phi$ is continuous, increasing, and one-to-one with $\Phi^{-1}$ Lipschitz continuous, in particular having a linear growth. We get
 \begin{equation}
 \begin{split}
 -\tau \deritt{\Phi(\ln\teta)} + \ksi\derit{\teta}+ \tau \Phi(\ln\teta) 
 + \derit{(\ro e)}-\sol\nabla\Phi(\ln\teta)+ \sol(\ro e\u_N)
  \\=\S(\teta,\nabla\u_N):\nabla \u_N - p(\ro,\teta)\sol\u_N + \epsil\delta(\Gamma \ro^{\Gamma-2}+2)\abs{\nabla\ro}^2 + \delta\teta^{-1} 
\\\text{ in }\es\times\Omega ,\label{ENf}\\
  \bigl(\kappa(\teta) + \delta\teta^{B}+\delta\teta^{-1}\bigr)\frac{\partial\teta}{\partial\en}=d(x,\teta)(\Theta_0-\teta)\qquad\text{ on }\es\times\partial\Omega.
  \end{split}
 \end{equation}
Since we have in our definition of the solution  the entropy equation instead of the energy equation we will present now also its approximative version. It could be derived by dividing \refx{ENf} by temperature $\teta$ with usage of \refx{CEf}
\begin{multline}
-\tau\derits{\frac{\Phi\carka(\ln\teta)}{\teta} \derits{\ln\teta}} - \tau\frac{\Phi\carka(\ln\teta)}{\teta^3}\left(\derit{\teta}\right)^2 + \ksi\derit{\ln\teta} + \derit{(\ro s)} \\
+ \tau\frac{\Phi(\ln\teta)}{\teta} + \bigl(\sol(\ro\u_N) +\det{\ro} \bigr)\frac{\ro e + p - \ro\teta s}{\ro\teta}
+\sol(\ro s \u_N) \\
- \sol \Bigl( \bigl(\kappa(\teta) + \delta\teta^{B}+\delta\teta^{-1}\bigr)\frac{\nabla\teta}{\teta}\Bigr) = \frac{1}{\teta}\S(\teta,\nabla\u_N):\nabla \u_N \\
+ \bigl(\kappa(\teta) + \delta\teta^{B}+\delta\teta^{-1}\bigr)\frac{\abs{\nabla\teta}^2}{\teta^2}+\delta\frac{1}{\teta^2} + \frac{\epsil\delta}{\teta}(\Gamma\ro^{\Gamma-2}+2)\abs{\nabla\ro}^2 \\ \text{ in }\es\times\Omega.\label{ETf}
\end{multline}

\subsection{Existence of approximate solutions for fixed parameters}
The main aim of this subsection is to show the following existence result.
\begin{lem}\label{exist}
For an arbitrary fixed set of parameters $N\in\N$, $\tau$, $\ksi$, $\epsil$, $\delta>0$ such that $\epsil \ll \delta$, there exists  at least one solution to the approximate scheme, id est $\ro\geq 0$, $\ro\in C^{\infty}\left(\es,\:\sobdp\right)$, $\u_N\in \LwiN$, and $\teta>0$, $\ln\teta$, $\teta\in\sobdps$,  such that \refx{CEf}, \refx{MEf}, and \refx{ENf} hold.
\end{lem}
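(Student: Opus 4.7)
The plan is to construct a solution of the approximate system (\ref{CEf})--(\ref{ENf}) via a Schauder fixed point argument on the pair $(\ro,\teta)$. Fix a closed bounded convex set $\mathcal{K}$ in $C(\es\times\overline{\Omega})\times C(\es\times\overline{\Omega})$ consisting of pairs $(\bar\ro,\bar\teta)$ with $\bar\ro\geq 0$, $\inte{\bar\ro(t,\cdot)}=M_0$, and $\bar\teta\geq\underline{\teta}>0$, and define a map $\mathcal{T}:(\bar\ro,\bar\teta)\mapsto(\ro,\teta)$ whose fixed points, together with the associated Galerkin velocity $\u_N$, correspond to solutions of (\ref{CEf})--(\ref{ENf}).

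The map $\mathcal{T}$ is built in three substeps. \emph{(i) Galerkin momentum:} substituting $(\bar\ro,\bar\teta)$ into the coefficients of (\ref{MEf}) and seeking $\u_N=\sum_{i=1}^N c_i\wi$ with constant coefficients $c_i$---the time-periodicity being absorbed into the basis $\wi(t,x)=a^k(t)\vektor{b}^l(x)$---reduces (\ref{MEf}) to a nonlinear algebraic system of $N$ equations in the $c_i$; testing by $\u_N$ yields a uniform bound (the $\ksi\det\u_N$ term provides the needed coercive dissipation after integration on $\es$), and Brouwer's fixed point theorem supplies a solution. \emph{(ii) Continuity:} inserting $\u_N$ into (\ref{CEf}) yields a linear uniformly parabolic equation with homogeneous Neumann condition; the mass balance $\derit{\inte{\ro}}+\epsil\inte{\ro}=\epsil M_0$ forces $\inte{\ro(t,\cdot)}\equiv M_0$ on the unique time-periodic solution, whose existence and $C^\infty(\es;\sobdp)$ regularity follow from classical parabolic theory (Fredholm theory for the periodic problem on $\es$ plus bootstrap), and nonnegativity from the minimum principle. \emph{(iii) Temperature:} plugging the fresh $\ro$ and $\u_N$ into (\ref{ENf}) and applying the Kirchhoff transform $Z=\Phi(\ln\teta)$ linearises the spatial diffusion into $-\Delta Z$, while $-\tau\deritt{Z}+\tau Z$ is a coercive Helmholtz-type operator on $\es$ with trivial kernel, furnishing a well-posed linear problem solvable in $\sobdps$ for every finite $p$ together with the Robin data; positivity of $\teta$ follows from a comparison argument exploiting the singular flux term $\delta\teta^{-1}$ and the boundary production $d(x,\teta)\Theta_0$, which both blow up as $\teta\to 0^+$.

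Uniform a priori bounds obtained by testing the momentum equation by $\u_N$, the continuity equation by powers of $\ro$, and the temperature equation by $1$ and $\ln\teta$ (mimicking the heuristic computations of Section 2.2 with the additional $\epsil$-, $\tau$- and $\ksi$-terms) show that a sufficiently large $\mathcal{K}$ is invariant under $\mathcal{T}$. Compactness of $\mathcal{T}$ follows from the spatial regularity gain in the two parabolic substeps combined with the Aubin--Lions lemma, while continuity follows from uniqueness and linear-stability estimates for the three subproblems with respect to the input data. The Schauder fixed point theorem then yields a fixed point, which by construction provides the desired approximate solution.

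The main technical obstacle I expect is guaranteeing a uniform positive lower bound $\teta\geq\underline{\teta}>0$ along the iteration, so that the singular nonlinearities $\ln\teta$ and $\teta^{-1}$ remain integrable with constants independent of $(\bar\ro,\bar\teta)\in\mathcal{K}$. This is precisely the role of the $\delta\teta^{-1}$-regularization of the flux and of the $\tau\Phi(\ln\teta)$-regularization on the periodic circle, but closing the argument requires a delicate comparison principle together with global absorption of the production terms appearing on the right-hand side of (\ref{ETf}).
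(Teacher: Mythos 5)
Your scheme is in the same spirit as the paper's (Schauder-type fixed point, Kirchhoff transform, separate Galerkin/continuity/energy substeps), but two points deserve correction, one of which is a genuine error.

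First, the claim that the $\ksi\derit{\u_N}$ term supplies the coercive dissipation in the Galerkin substep is wrong: with constant Galerkin coefficients and the time-periodic basis $\wi(t,x)=a^k(t)\vektor{b}^l(x)$, testing by $\u_N$ gives $\INT{\ksi\derit{\u_N}\cdot\u_N}=\tfrac{\ksi}{2}\int_{\es}\tfrac{\de{}}{\dt}\inte{\abs{\u_N}^2}\dt=0$ by periodicity. The coercivity actually comes from $\S(\bar\teta,\nabla\u_N):\nabla\u_N$ together with Korn's inequality; the $\ksi$-term plays a different role (compactness in time at the $N\to\infty$ stage).

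Second, and more structurally, your choice of fixed-point variables $(\bar\ro,\bar\teta)$ together with the classical Schauder theorem (invariant convex compact set $\mathcal{K}$) forces you to propagate a uniform lower bound $\teta\geq\underline{\teta}>0$ along the iteration, which you correctly flag as the main difficulty. The paper avoids this altogether by (i) iterating on the pair $(\u_N,\ln\teta)$ rather than $(\ro,\teta)$, with the density $\ro$ recovered inside the map from the frozen velocity via the parabolic Neumann problem, and (ii) using the Leray--Schauder homotopy form of the fixed point theorem, which only requires an a priori bound on the possible fixed points of $\lambda\mathcal{T}$, not the construction of an invariant set. Since one solves the linear elliptic problem for $Z$ and then sets $\ln\teta:=\Phi^{-1}(Z)$, $\teta:=\ex{\Phi^{-1}(Z)}$, strict positivity of $\teta$ is automatic; no comparison principle and no quantitative lower bound are needed at the existence-of-fixed-point stage, only the $W^{1,p}$ bound on $\ln\teta$ coming from the combined energy/entropy estimate \refx{odhadfl}. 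If you insist on working directly in $(\ro,\teta)$ with an invariant $\mathcal{K}$, the lower bound is indeed a real obstruction, since the production terms on the right-hand side of \refx{ETf} depend on the frozen $\bar\teta$ and there is no obvious uniform subsolution over all of $\mathcal{K}$; switching to the $\ln\teta$ formulation and the Leray--Schauder form is the cleaner way to close the argument.
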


The main idea of the proof is similar to the proof presented in \cite{FeTP}; we repeat the main steps for the sake of clarity.
First of all, we observe that as soon as we have the velocity field, we are able to recover the density, namely
\begin{prop} \label{CEfx}
For any velocity field $\unt\in \LwiN$, there exists a density $\ro\in C^{\infty}\left(\es;\sobdp\right)$ satisfying $\ro \geq 0$, 
\begin{equation}
\begin{split}
\det\ro + \sol(\ro\unt)-\epsil\Delta\ro +\epsil\ro=\epsil m\text{ in }\es\times\Omega\\
\frac{\partial \ro}{\partial \en} = 0 \text{ on }\es\times\partial\Omega. \label{contfx}
\end{split}
\end{equation}
Moreover,
\begin{equation} \label{59a}
\int_\Omega \ro \dx = m |\Omega| = M_0.
\end{equation}
\end{prop}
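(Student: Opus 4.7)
My plan is to realise $\ro$ as the unique fixed point in $L^{1}(\Omega)$ of the time-$L$ map for the linear parabolic Cauchy problem \refx{contfx}, and then to recover regularity, mass conservation and non-negativity from the structure of the equation. Since $\unt\in\LwiN$ is smooth and $L$-periodic in $t$, \refx{contfx} on $[0,L]$ is a linear parabolic equation with smooth coefficients, Neumann boundary data and non-negative source $\epsil m$. Classical $L^{p}$ parabolic theory furnishes, for every $\ro_{0}\in\sobdp$ compatible with the Neumann condition, a unique strong solution $\ro\in L^{p}\bigl(0,L;\sobdp\bigr)\cap W^{1,p}\bigl(0,L;L^{p}(\Omega)\bigr)$, and the weak maximum principle gives $\ro\geq 0$ whenever $\ro_{0}\geq 0$ because the source $\epsil m$ is non-negative.

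To produce a periodic solution I would consider the affine map $\mathcal{T}\colon L^{1}(\Omega)\to L^{1}(\Omega)$, $\mathcal{T}(\ro_{0})=\ro(L,\cdot)$. For two solutions $\ro_{1},\ro_{2}$ of the Cauchy problem, the difference $w=\ro_{1}-\ro_{2}$ solves the homogeneous equation
$$\det w + \sol(w\unt) - \epsil\Delta w + \epsil w = 0, \qquad \frac{\partial w}{\partial \en}\bigg|_{\partial\Omega}=0.$$
Approximating $s\mapsto|s|$ by smooth convex $\phi_{\delta}$, multiplying by $\phi_{\delta}\carka(w)$, and using $\unt|_{\partial\Omega}=\vektor{0}$ together with the Neumann condition, the two contributions arising from $\sol(w\unt)$ cancel, the Laplacian contribution has a favourable sign (Kato's inequality), and in the limit $\delta\to 0$ one obtains
$$\frac{\de{}}{\de t}\norm{w(t)}_{L^{1}(\Omega)}+\epsil\norm{w(t)}_{L^{1}(\Omega)}\leq 0.$$
Thus $\norm{\mathcal{T}(\ro_{0,1})-\mathcal{T}(\ro_{0,2})}_{L^{1}(\Omega)}\leq\ex{-\epsil L}\norm{\ro_{0,1}-\ro_{0,2}}_{L^{1}(\Omega)}$ is a strict contraction, Banach's principle supplies the unique fixed point $\ro_{0}^{*}$, and non-negativity is preserved along the Picard iteration because the non-negative cone is closed and $\mathcal{T}$-invariant.

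Mass conservation follows by integrating \refx{contfx} over $\Omega$: the convective term vanishes thanks to $\unt|_{\partial\Omega}=\vektor{0}$ and the Laplacian term vanishes by the Neumann condition, leaving the ODE $M\carka(t)+\epsil M(t)=\epsil M_{0}$ for $M(t)=\int_{\Omega}\ro\dx$, whose only $L$-periodic solution is the constant $M\equiv M_{0}$, i.e.\ \refx{59a}. Finally, a standard parabolic bootstrap exploiting the $C^{\infty}$ smoothness of $\unt$ and the $C^{2+\nu}$ regularity of $\partial\Omega$ upgrades the fixed point all the way to $\ro\in C^{\infty}\bigl(\es;\sobdp\bigr)$ for every $p<\infty$. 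The crux of the argument is the choice of the $L^{1}$ norm for the contraction: Kato's inequality annihilates both the convective and the Laplacian contributions in that norm, so that the damping term $\epsil w$ alone supplies the decay, whereas the analogous $L^{2}$ estimate would leave an uncontrolled $\tfrac{1}{2}\int w^{2}\sol\unt$ term which can be arbitrarily large for $\unt$ in the finite-dimensional space $\LwiN$.
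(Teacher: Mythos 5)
Your proof is correct, and it takes a genuinely different route from the paper's. The paper disposes of this proposition in one sentence by invoking the (Leray--)Schauder fixed point theorem stated immediately afterwards, applied to the time-$L$ solution operator of the linear parabolic Cauchy problem; that approach rests on compactness (parabolic smoothing) together with an a priori bound on the possible fixed points, and by itself does not yield uniqueness. You instead exhibit the time-$L$ map as a strict contraction on $L^{1}(\Omega)$, with modulus $\ex{-\epsil L}<1$, and apply Banach's fixed point theorem. The decisive observation is the choice of the $L^{1}$ norm: testing the homogeneous difference equation with $\phi_\delta'(w)$ and passing $\delta\to 0$ via Kato's inequality annihilates both the Laplacian contribution (by convexity and the Neumann condition) and the $\sol\unt$ contribution — since $w\phi_\delta'(w)-\phi_\delta(w)=O(\delta)$ for the standard regularization of $|\cdot|$ — leaving only the damping $\epsil w$. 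In $L^{2}$ one would instead pick up the term $\tfrac12\int_\Omega w^{2}\sol\unt$, whose sign is uncontrolled for $\unt\in\LwiN$, so the contraction constant would in general exceed one; working in $L^{1}$ circumvents this cleanly. Your route buys uniqueness of the periodic density and a quantitative exponential contraction rate for the Picard iteration; what it costs is the need to justify carefully that the solution operator, defined first on smooth data compatible with the Neumann condition, extends by density to a well-defined contraction on all of $L^{1}(\Omega)$, and then to run a parabolic bootstrap to recover $\ro\in C^{\infty}(\es;\sobdp)$ — both steps are standard but worth stating. The treatment of non-negativity via $\mathcal{T}$-invariance of the closed positive cone (weak maximum principle with non-negative source $\epsil m$, handling the possibly sign-changing $\epsil+\sol\unt$ by the usual exponential rescaling) and the proof of \eqref{59a} by integrating \eqref{contfx} over $\Omega$ and solving the resulting scalar $L$-periodic ODE are both correct and match the paper's remark on \eqref{59a}.
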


The proof is standard and it is based on application of a version of the Schauder fixed point theorem formulated below. Note \eqref{59a} is a direct consequence of \eqref{contfx} integrated over $\Omega$ and the uniqueness argument for ordinary differential equations.
 

We have

\begin{LSFPT}
Let $X$ be a Banach space, and $\Te$ a continuous and compact mapping $\Te:\:X \mapsto X$, such that the possible fixed points
 $x = \lambda \Te x$, $0 \leq \lambda \leq 1$
are bounded in $X$. Then $\Te$ possesses a fixed point.
\end{LSFPT}

We will apply this theorem on the mapping $$\Te: \LwiN\times \soblps\mapsto \LwiN\times \soblps$$
which is defined as a solving operator  to the following system $(\Te(\unt,\ln\tteta)=\Te(\u_N,\ln\teta))$:
\begin{multline}
\INT{ \left(   \ksi\derit{\u_N} \cdot\wi+\derit{(\ro\unt)}\cdot\wi-(\ro\unt\otimes\unt):\nabla\wi   \right)   } \\
+ \INT{  \Bigl( \S(\tteta,\nabla\u_N):\nabla\wi- \left(p(\ro,\tteta) +\delta (\ro^\Gamma + \ro^2) \right)\sol\wi \Bigr)}\label{MEfx} \\
= 	\INT { \biggl( -\epsil\nabla\ro \cdot\nabla\unt\wi + \frac{1}{2}\epsil(m-\ro)\unt\cdot\wi +\ro\ef\cdot\wi  \biggr) } \\
i=1,\ldots,N
\end{multline}
 \begin{equation}
 \begin{split}
 -\tau \deritt{\Phi(\ln\teta)} + \ksi\derit{\tteta}+ \tau \Phi(\ln\teta) 
 + \derit{(\ro \te)}-\sol\nabla\Phi(\ln\teta)+ \sol(\ro \te\unt)
  \\=\S(\tteta,\nabla\unt):\nabla \unt - p(\ro,\tteta)\sol\unt + \epsil\delta(\Gamma \ro^{\Gamma-2}+2)\abs{\nabla\ro}^2 + \delta\tteta^{-1} 
\\\text{ in }\es\times\Omega ,\label{ENfx}\\
 \frac{\partial\Phi(\ln\teta)}{\partial\en}=d(x,\tteta)(\Theta_0-\tteta),\qquad\text{ on }\es\times\partial\Omega,
  \end{split}
 \end{equation}
 where $\ro$ is defined as a solution to \refx{contfx} from Proposition \ref{CEfx}, $\Phi$ is as above. We have intorduced the notation $\te=e(\ro,\tteta)$.
 
 Considering the momentum equation, it is easy to show the existence of solution to the corresponding system of linear algebraic equations, using Korn's inequality and Brower fixed point theorem.
 \begin{prop}
  For any $\unt\in\LwiN$, $\tteta\in\soblps$, and a corresponding $\ro\in \soblps$ from Proposition \ref{CEfx}, there exists a unique solution to $\refx{MEfx}$. Furthermore, it satisfies $\u_N\in \cinf\left(\es\times\bar{\Omega}, \LwiN\right).$
\end{prop}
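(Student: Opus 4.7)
The plan is to observe that once $\unt$, $\tteta$, and the associated density $\ro$ from Proposition \ref{CEfx} are fixed, \refx{MEfx} is a \emph{linear} system in $\u_N$: the only $\u_N$-dependent terms are $\ksi\derit{\u_N}$ and $\S(\tteta,\nabla\u_N)$ (the viscosity coefficients $\mu(\tteta)$ and $\eta(\tteta)$ are prescribed functions because $\tteta$ is given), while all convective, pressure, continuity-regularisation and external-force contributions constitute a fixed right-hand side. Writing $\u_N = \sum_{j=1}^N c_j\wj$ with constants $c_j\in\R$---the time dependence is already built into the trigonometric factors $a^k(t)$ of the basis---and projecting against each $\wi$ produces an algebraic system $A\vec{c} = \vec{b}$ with
\begin{equation*}
A_{ij} = \INT{\Bigl(\ksi\,\derit{\wj}\cdot\wi + \S(\tteta,\nabla\wj):\nabla\wi\Bigr)}.
\end{equation*}

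To get existence and uniqueness it suffices to prove that $A$ is positive definite. For $\vec{c}\in\R^N$, set $\vektor{v}=\sum_i c_i\wi$; then
\begin{equation*}
\vec{c}^{\,T}A\vec{c} = \ksi\INT{\derit{\vektor{v}}\cdot\vektor{v}} + \INT{\S(\tteta,\nabla\vektor{v}):\nabla\vektor{v}}.
\end{equation*}
The first summand vanishes after rewriting $\derit{\vektor{v}}\cdot\vektor{v}=\tfrac{1}{2}\derit{\abs{\vektor{v}}^2}$ and integrating in $t$ over $\es$ using $L$-periodicity. For the second, the structural bound $\S(\tteta,\nabla\vektor{v}):\nabla\vektor{v} \geq 2\underline{\mu}\bigl|\tfrac{1}{2}(\nabla\vektor{v}+\nabla^T\vektor{v}) - \tfrac{1}{3}(\sol\vektor{v})\Id\bigr|^2$ combined with Korn's inequality---applicable thanks to the no-slip condition $\vektor{v}|_{\partial\Omega}=\vektor{0}$---gives $\vec{c}^{\,T}A\vec{c}\geq c_0\norm{\nabla\vektor{v}}_{L^2(\es\times\Omega)}^2$. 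By the orthonormality $(\wi,\wj)=\INT{\nabla\wi:\nabla\wj}=\delta_{ij}$ of the basis, the right-hand side equals $c_0\abs{\vec{c}}^2$, so $A$ is invertible and the system has a unique solution. One could equivalently package this in the Brouwer-fixed-point language by minimising the associated quadratic functional on a sufficiently large ball in $\R^N$, but matrix invertibility is the conceptual content.

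Smoothness is then essentially automatic: because the $c_j$ are constants and each basis element $\wi(t,x)=a^k(t)\vektor{b}^l(x)$ is $C^\infty$ in $t$ (being trigonometric) and lies in $W^{2,p}(\Omega)$ for every $p<\infty$ in $x$, the solution $\u_N$ belongs to $\cinf(\es\times\bar\Omega,\LwiN)$ as claimed. The only delicate point is ensuring that the coercivity constant $c_0$ does not depend on the particular $\tteta$, which is guaranteed by the uniform lower bound $\mu(\tteta)\geq\underline{\mu}(1+\tteta)\geq\underline{\mu}>0$; thus Korn-plus-orthonormality bookkeeping is the main---but entirely routine---task, and no further obstacle arises.
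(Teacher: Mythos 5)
Your proof is correct and rests on the same essential ingredient as the paper, namely Korn's inequality on $W^{1,2}_0(\Omega)$ together with the uniform lower bound $\mu(\tteta)\geq\underline{\mu}>0$, which makes the quadratic form associated with $\S(\tteta,\nabla\cdot):\nabla\cdot$ coercive on the finite-dimensional Galerkin space $\LwiN$. The only difference is cosmetic: the paper packages existence via Brouwer's fixed-point theorem, while you directly verify that the symmetric part of the system matrix $A$ is positive definite (the antisymmetric contribution $\ksi\INT{\derit{\vektor{v}}\cdot\vektor{v}}=\tfrac{\ksi}{2}\INT{\derit{\abs{\vektor{v}}^2}}$ vanishing by $L$-periodicity in $t$) and hence that $A$ is invertible. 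Since \refx{MEfx} is a genuinely linear algebraic system once $\unt$, $\tteta$ and $\ro$ are frozen, Brouwer is strictly overkill, and your direct linear-algebra argument is the more elementary route to the same conclusion; note also that for \emph{this} proposition you do not even need the coercivity constant to be uniform in $\tteta$ (that matters later, for the fixed-point estimates), so the closing remark in your proof is a harmless but unnecessary safeguard.
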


The second part of the solving operator $\Te$ is defined through the energy equation, 
\begin{prop}\label{EEfx}
For any $\unt\in\LwiN$,$\tteta\in\soblps$, and a corresponding $\ro$ from Proposition \ref{CEfx}, there exists a uniquely defined $\teta>0$ such that $\teta$ and $\ln\teta$ $\in\sobdps$, satisfying \refx{ENfx}.
\end{prop}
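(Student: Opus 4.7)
The plan is to observe that \refx{ENfx} decouples as a \emph{linear elliptic} boundary-value problem on $\es\times\Omega$ for the single unknown $v:=\Phi(\ln\teta)$. Indeed, every occurrence of $\teta$ in \refx{ENfx} is actually the given $\tteta$ (recall $\te=e(\ro,\tteta)$), while the only term carrying the unknown is the left-hand side combination $-\tau v_{tt}-\Delta v+\tau v$. Collecting the remaining terms into a forcing $F$ and writing the Neumann datum $h(x,t):=d(x,\tteta)(\Theta_0-\tteta)$, the problem reads
\begin{equation*}
-\tau\, v_{tt}-\Delta v+\tau v=F\quad\text{in }\es\times\Omega,\qquad \partial_{\en}v=h\quad\text{on }\es\times\partial\Omega,
\end{equation*}
together with periodicity in $t$. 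Since $\tau>0$, this operator is uniformly elliptic on the smooth closed-in-time cylinder $\es\times\Omega$.

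The first step is to check that $F$ and $h$ have the right integrability. Using $\ro\in C^{\infty}(\es;\sobdp)$ from Proposition \ref{CEfx}, $\unt\in\LwiN$ (so $\unt$ is smooth in $t$ and $\nabla\unt$ lies in $L^{\infty}$), and $\tteta\in\soblps$ with $p$ large enough so that $\soblps\embed C(\es\times\overline{\Omega})$, one gets $\tteta$ bounded above and bounded away from zero on the compact cylinder. Consequently all products $\S(\tteta,\nabla\unt){:}\nabla\unt$, $p(\ro,\tteta)\sol\unt$, $\epsil\delta(\Gamma\ro^{\Gamma-2}+2)|\nabla\ro|^{2}$, $\delta\tteta^{-1}$, $\derit{(\ro e(\ro,\tteta))}$, $\sol(\ro e(\ro,\tteta)\unt)$ and $\ksi\derit\tteta$ lie in $L^{p}(\es\times\Omega)$, and $h\in W^{1-1/p,p}(\es\times\partial\Omega)$.

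The second step is the linear solvability. Testing the weak formulation of the homogeneous problem by $v$ itself and integrating by parts gives the coercive identity
\begin{equation*}
\int_{\es\times\Omega}\bigl(\tau v_{t}^{2}+|\nabla v|^{2}+\tau v^{2}\bigr)=\int_{\es\times\partial\Omega} h\,v,
\end{equation*}
so Lax--Milgram produces a unique $W^{1,2}$ weak solution, and uniqueness is immediate. Standard interior and boundary elliptic regularity on the smooth manifold with boundary $\es\times\Omega$ (with $C^{2+\nu}$ spatial boundary and periodic-in-time structure) then yield $v\in\sobdps$ for any finite $p$. Recovering $\ln\teta=\Phi^{-1}(v)$ and $\teta=\ex{\Phi^{-1}(v)}$ is preservation of Sobolev regularity under a smooth Nemytskii operator: because $\Phi'(z)=\kappa(\ex{z})\ex{z}+\delta\ex{(B+1)z}+\delta\geq\delta>0$ is smooth, $\Phi^{-1}$ is $C^{\infty}$, so both $\ln\teta$ and $\teta$ lie in $\sobdps$, and the representation $\teta=\ex{\Phi^{-1}(v)}$ automatically gives $\teta>0$ without any maximum-principle argument.

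The only mildly delicate point is the second one, verifying that $F\in L^{p}$: without a pointwise lower bound on $\tteta$ the term $\delta\tteta^{-1}$ would be dangerous, and similarly a low-regularity $\ro$ would spoil $|\nabla\ro|^{2}$. Here, however, we exploit precisely the structure of the approximation: $\ro$ comes with two spatial derivatives in $L^{p}$ thanks to the $\epsil$-regularisation of the continuity equation (Proposition \ref{CEfx}), while $\tteta$ is continuous and strictly positive by Sobolev embedding on the compact cylinder $\es\times\overline{\Omega}$ for $p>4$. Once this bookkeeping is in place, Proposition \ref{EEfx} reduces to textbook linear elliptic theory.
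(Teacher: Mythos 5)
Your proof is correct and follows essentially the same route as the paper: introduce the Kirchhoff variable $Z=\Phi(\ln\teta)$, observe that \refx{ENfx} becomes a \emph{linear} uniformly elliptic problem for $Z$ (since $\Phi'\geq\delta$ and $\tau>0$), solve it by standard linear theory, and finally set $\ln\teta=\Phi^{-1}(Z)$, $\teta=\ex{\ln\teta}$, so positivity of $\teta$ is automatic. You spell out the integrability/regularity bookkeeping and the Lax--Milgram/elliptic-regularity steps that the paper leaves implicit, and you correctly pin the bound $\tteta\geq c>0$ (needed for $\delta\tteta^{-1}$ and the constitutive terms) on the fact that the fixed-point iterate is really $\ln\tteta\in\soblps$, which by Sobolev embedding is bounded on the compact cylinder.
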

\begin{proof}[Proof of Proposition \ref{EEfx}]
The crucial point in the proof is that instead of searching directly for $\teta$, we solve the system for $\ln\teta$, and then set  $\teta :=\ex{\ln\teta}$, which immediately implies $\teta>0.$ More precisely, we solve the elliptic problem for $Z$
 \begin{multline*}
 -\tau \deritt{Z} + \ksi\derit{\tteta}+ \tau Z 
 + \derit{(\ro \te)}-\sol\nabla Z+ \sol(\ro \te\unt) -\delta\tteta^{-1}  \\
 =\S(\tteta,\nabla\unt):\nabla \unt - p(\ro,\tteta)\sol\unt + \epsil\delta(\Gamma \ro^{\Gamma-2}+2)\abs{\nabla\ro}^2 \text{ in }\es\times\Omega ,
  \end{multline*}
  
  \vspace{-0.8cm}
  
  \begin{align}\label{ENZfx}
 \frac{\partial Z}{\partial\en}=d(x,\tteta)(\Theta_0-\tteta)\qquad\text{ on }\es\times\partial\Omega,
  \end{align}
and then define $\ln\teta=\Phi^{-1}(Z),$ which is well-defined thanks to \refx{KirT} and the note below it.
\end{proof}

To summarize, $\Te$ is a compact continuous operator from $\LwiN\times\soblps$ into itself. Thus, it remains to prove the boundedness of the possible fixed points \begin{equation}\lambda \Te(\u_N,\teta) = (\u_N,\teta)\text{, for }0\leq\lambda\leq1\label{fixed}\end{equation} in the space $\LwiN\times\soblps.$ Formula \refx{fixed} is  nothing but

\begin{multline}
\ins{\inte {  \left(   \ksi\derit{\u_N} \cdot\wi+\lambda\derit{(\ro\u_N)}\cdot\wi-\lambda(\ro\u_N\otimes\u_N):\nabla\wi   \right)   }} \\
+ \ins{\inte {  \Bigl( \S(\teta,\nabla\u_N):\nabla\wi- \lambda\left(p(\ro,\teta) +\delta (\ro^\Gamma + \ro^2) \right)\sol\wi \Bigr)}}\label{MEfl} \\
= \lambda	\ins{\inte { \left( -\epsil\nabla\ro \cdot\nabla\u_N\wi + \frac{1}{2}\epsil(m-\ro)\u_N\cdot \wi +\ro\ef\cdot\wi  \right) }} 
\end{multline}

\begin{equation}
 \begin{split}
 -\tau \deritt{\Phi(\ln\teta)} + \lambda \ksi\derit{\teta}+ \tau \Phi(\ln\teta) 
 + \lambda\derit{(\ro e)}-\sol\nabla\Phi(\ln\teta)+ \lambda\sol(\ro e\u_N)
  \\=\lambda\S(\teta,\nabla\u_N):\nabla \u_N - \lambda p(\ro,\teta)\sol\u_N + \lambda\epsil\delta(\Gamma \ro^{\Gamma-2}+2)\abs{\nabla\ro}^2 + \lambda\delta\teta^{-1} 
\\\text{ in }\es\times\Omega ,\label{ENfl}\\
  \bigl(\kappa(\teta) + \delta\teta^{B}+\delta\teta^{-1}\bigr)\frac{\partial\teta}{\partial\en}=\lambda d(x,\teta)(\Theta_0-\teta)\qquad\text{ on }\es\times\partial\Omega,
  \end{split}
 \end{equation} where $\ro$ satisfies \refx{CEf}, and $\Phi$ is given by \refx{KirT}.
Using $\u_N$ as a test function in \refx{MEfl} with help of integration by parts yields
\begin{multline}
\ins{\inte {  \left(   \ksi\derit{\u_N} \cdot\u_N+\lambda\derit{(\ro\u_N)}\cdot\u_N-\lambda(\ro\u_N\otimes\u_N):\nabla\u_N   \right)   }} \\
+ \ins{\inte {  \Bigl( \S(\teta,\nabla\u_N):\nabla\u_N- \lambda\left(p(\ro,\teta) +\delta (\ro^\Gamma + \ro^2) \right)\sol\u_N \Bigr)}} \\
= \lambda	\ins{\inte { \left( \frac{1}{2}\epsil\Delta\ro \abs{\u_N}^2 + \frac{1}{2}\epsil(m-\ro)\abs{\u_N}^2 +\ro\ef\cdot\u_N \right) }} .
\end{multline}
Using \refx{CEf} multiplied on $\lambda\frac{1}{2} \abs{\u_N}^2$ with another integration by parts\footnote{The terms in the form of time derivative vanish due to the time periodic condition.} we get
\begin{multline}
 \ins{\inte {  \Bigl( \S(\teta,\nabla\u_N):\nabla\u_N\Bigr) }} \\
 =  \ins{\inte {  \Bigl(\lambda\left(p(\ro,\teta) +\delta (\ro^\Gamma + \ro^2) \right)\sol\u_N  +\ro\ef\cdot\u_N \Bigr) }} \label{uuu}.
\end{multline}
Integrating the energy equation \refx{ENfl} over $\es\times\Omega$ we obtain with usage of the boundary condition
\begin{multline}
\tau \ins{\inte{ \Phi(\ln\teta) }} +\lambda\ins{\inth{ d(x,\teta)\teta }}   \\
=\lambda\ins{\inte{ \Bigl(\S(\teta,\nabla\u_N):\nabla \u_N - p(\ro,\teta)\sol\u_N + \delta\teta^{-1}\Bigr)}} \\
+ \lambda\epsil\delta\ins{\inte{(\Gamma \ro^{\Gamma-2}+2)\abs{\nabla\ro}^2 }}+\lambda\ins{\inth{ d(x,\teta)\Theta_0 }}.\label{intENfl}
\end{multline}
Further, we get renormalized version of the continuity equation by multliplying \refx{CEf} by $\frac{\beta}{\beta-1}\ro^{\beta-1}$ after some obvious manipulations
\begin{multline}
\epsil\beta\INT{ \Bigl(\frac{1}{\beta-1}\ro^\beta+\ro^{\beta-2}\abs{\nabla\ro}^2  \Bigr) }\\
+\INT{\ro^\beta \sol\u_N }=\epsil \frac{\beta}{\beta-1}\INT{m\ro^{\beta-1} }.\label{RCEf}
\end{multline}
In order to get the  total energy balance, we sum up \refx{uuu}, \refx{intENfl} and \refx{RCEf} with $\beta=2, \Gamma$ multiplied by $\delta\lambda$. This reads
\begin{multline}
(1-\lambda) \ins{\inte {  \bigl( \S(\teta,\nabla\u_N):\nabla\u_N\bigr) }}+\tau \ins{\inte{ \Phi(\ln\teta) }} \\
+\lambda\ins{\inth{ d(x,\teta)\teta }} +\epsil\delta\lambda\INT{\Bigl(\frac{\Gamma}{\Gamma-1}\ro^{\Gamma}+2\ro^{2}\Bigr)}\\
=\lambda\INT{\Bigl(\ro\ef\cdot\u_N + \delta\teta^{-1} \Bigr) }+\lambda\ins{\inth{ d(x,\teta)\Theta_0 }}
\\+\lambda\INT{\Bigl(\epsil\delta\frac{\Gamma}{\Gamma-1}m\ro^{\Gamma-1}+2\epsil\delta m \ro \Bigr) }.
\end{multline}
The last two integrals on the right-hand side can be pushed to the left-hand side by means of Young's inequality obtaining
\begin{multline}
(1-\lambda) \ins{\inte {  \bigl( \S(\teta,\nabla\u_N):\nabla\u_N\bigr) }}+\tau \ins{\inte{ \Phi(\ln\teta) }} \\
+\lambda\ins{\inth{ d(x,\teta)\teta }} +\epsil\delta\lambda\INT{\Bigl(\frac{\Gamma}{\Gamma-1}\ro^{\Gamma}+2\ro^{2}\Bigr)}\\
\leq  C \Bigl(1+\lambda\INT{\bigl(\ro\ef\cdot\u_N + \delta\teta^{-1} \bigr) }\Bigr). \label{TEBfl}
\end{multline}
By similar arguments which lead from \refx{ENf} to \refx{ETf} we can obtain from \refx{ENfl} the following entropy identity
\begin{multline}
-\tau\derits{\frac{\Phi\carka(\ln\teta)}{\teta} \derits{\ln\teta}} - \tau\frac{\Phi\carka(\ln\teta)}{\teta^3}\left(\derit{\teta}\right)^2 +\lambda \ksi\derit{\ln\teta} +\lambda \derit{(\ro s)} \\
+ \tau\frac{\Phi(\ln\teta)}{\teta} + \lambda\bigl(\sol(\ro\u_N) +\det{\ro} \bigr)\frac{\ro e + p - \ro\teta s}{\ro\teta}
+\sol(\ro s \u_N) \\
- \sol \Bigl( \bigl(\kappa(\teta) + \delta\teta^{B}+\delta\teta^{-1}\bigr)\frac{\nabla\teta}{\teta}\Bigr) = \lambda\frac{1}{\teta}\S(\teta,\nabla\u_N):\nabla \u_N \\
+ \bigl(\kappa(\teta) + \delta\teta^{B}+\delta\teta^{-1}\bigr)\frac{\abs{\nabla\teta}^2}{\teta^2}+\lambda\delta\frac{1}{\teta^2} +\lambda \frac{\epsil\delta}{\teta}(\Gamma\ro^{\Gamma-2}+2)\abs{\nabla\ro}^2 \\ \text{ in }\es\times\Omega,\label{ETfl}
\end{multline}which can be integrated over $\es\times\Omega$ yielding
\begin{equation}\begin{split}
&\INT{\bigl(\kappa(\teta)+\delta \teta^B+\delta \teta^{-1}\bigr) \frac{\abs{\nabla\teta}^{2}}{\teta^2} }+ \tau\INT{ \Phi\carka(\ln\teta)\frac{\left(\det{\teta}\right)^2}{\teta^3} } \\
&\qquad +\lambda\ins{ \inth{\frac{1}{\teta}d(x,\teta)\Theta_0}} +\lambda\INT{  \Bigl(\frac{1}{\teta} \S(\teta,\nabla\u_N):\nabla\u_N + \frac{\delta}{\teta^2}\Bigr)} \\
&\qquad +\epsil\delta\lambda\INT {  \frac{1}{\teta}(\Gamma\ro^{\Gamma-2}+2)\abs{\nabla\ro}^2 }  =\lambda\ins{ \inth{ d(x,\teta)\Theta_0  } }\\
&\qquad\quad+\tau\INT{ \frac{\Phi(\ln\teta)}{\teta}  }+\INT{\bigl(\sol(\ro\u_N) +\det{\ro} \bigr)\frac{\ro e + p - \ro\teta s}{\ro\teta}}.\label{intETfl}
\end{split}\end{equation}
In order to estimate the last term on the right-hand side, we will use the continuity equation \refx{CEf}. Hence the last integral transforms into
\begin{equation}\epsil\INT{\bigl(  m-\ro+\Delta\ro  \bigr)\frac{\ro e + p - \ro\teta s}{\ro\teta}},\label{epscast}\end{equation}
and by virtue of Gibbs' relation \refx{Gibbs} we recognize the terms with the negative sign
\begin{equation}
\begin{split}
&\epsil\INT{\Delta \ro  \frac{\ro e(\ro,\teta) + p(\ro,\teta) - \ro\teta s(\ro,\teta)}{\ro\teta}}\\
&\quad\qquad=-\epsil\INT{ \abs{\nabla\ro}^2\frac{\partial}{\partial\ro}\Bigl( \frac{e(\ro,\teta)}{\teta} + \frac{p(\ro,\teta)}{\ro\teta} -s(\ro,\teta)\Bigr)}\\
&\qquad\quad\qquad-\epsil\INT{ {\nabla\ro}\cdot\nabla\teta\frac{\partial}{\partial\teta}\Bigl( \frac{e(\ro,\teta)}{\teta} + \frac{p(\ro,\teta)}{\ro\teta} -s(\ro,\teta)\Bigr)}\\
&\quad\qquad=-\epsil\INT {\abs{\nabla\ro}^2  \frac{1}{\ro\teta}\frac{\partial p(\ro,\teta)}{\partial \ro}  }\\
&\qquad\quad\qquad+\epsil\INT{ {\nabla\ro}\cdot\nabla\teta\frac{1}{\teta^2}\Bigl(e(\ro,\teta) +\ro\frac{\partial e(\ro,\teta)}{\partial \ro}  \Bigr)}.\label{esteps1}
\end{split}
\end{equation}
Due to $\frac{\partial p(\ro,\teta)}{\partial \ro} =\gamma \ro^{\gamma-1} + \teta > 0$, we can put the first term to the left-hand side, while the other one can be estimated using Young's inequality as follows:
\begin{multline}
\epsil\INT{ {\nabla\ro}\cdot\nabla\teta\frac{1}{\teta^2}\Bigl(\frac{\gamma}{\gamma-1} \ro^{\gamma-1} + c_v\teta  \Bigr)}\\
\leq \epsil\frac{\gamma}{\gamma-1}\INT{ \frac{ \ro^{\gamma-1}{\nabla\ro}}{\sqrt{\teta}}\cdot\frac{{\nabla\teta}}{\teta^{3/2}}}+
\epsil c_v\INT{\Bigl(     \frac{{\nabla\ro}\cdot{\nabla\teta} }{\teta}      \Bigr)} \\
\leq \frac{\epsil\delta}{4} \INT{\frac{1}{\teta}\Bigl( 1+  \ro^{\Gamma-2}    \Bigr)\abs{\nabla\ro}^2 }
+ C(\delta)\epsil\INT{\Bigl(\frac{\abs{\nabla\teta}^2}{\teta^3} + \frac{\abs{\nabla\teta}^2}{\teta}  \Bigr)},\label{esteps2}
\end{multline}
provided $\Gamma\geq2\gamma.$ We will choose $\epsil\ll\delta$ so that $C(\delta)\epsil<\frac{\delta}{2}$, so both terms can be pushed to the left-hand side. Other terms coming from \refx{epscast} can be treated similarly as in\cite{NoPo11}, so we get with $C$ independent of the approximative parameters
\begin{equation}\begin{split}
&\INT{\bigl(\kappa(\teta)+\delta \teta^B+\delta \teta^{-1}\bigr) \frac{\abs{\nabla\teta}^{2}}{\teta^2} }+ \tau\INT{ \Phi\carka(\ln\teta)\frac{\left(\det{\teta}\right)^2}{\teta^3} } \\
&\qquad + \lambda\ins{ \inth{\frac{1}{\teta}d(x,\teta)\Theta_0}} +\lambda\INT{  \Bigl(\frac{1}{\teta} \S(\teta,\nabla\u_N):\nabla\u_N+ \frac{\delta}{\teta^2}\Bigr)} \\
&\qquad + \epsil\delta\lambda\INT {  \frac{1}{\teta}(\Gamma\ro^{\Gamma-2}+2)\abs{\nabla\ro}^2 } \leq
C\biggl(1+ \lambda\ins{ \inth{ d(x,\teta)\Theta_0  } }\biggr.\\
\biggl.&\qquad\qquad\qquad\qquad\qquad+\tau\INT{ \frac{\Phi(\ln\teta)}{\teta}  }+\epsil\lambda\INT{\ro s(\ro,\teta)}\biggr). \label{EIfl}
\end{split}\end{equation}
If we sum up the energy inequality \refx{TEBfl}, and the entropy inequality \refx{EIfl} we get
\begin{equation}\begin{split}
&\INT{\bigl(\kappa(\teta)+\delta \teta^B+\delta \teta^{-1}\bigr) \frac{\abs{\nabla\teta}^{2}}{\teta^2} }+ \tau\INT{ \Phi\carka(\ln\teta)\frac{\left(\det{\teta}\right)^2}{\teta^3} } \\
&\qquad+\lambda\ins{\inth{ d(x,\teta)\left(\teta+\frac{\Theta_0}{\teta}\right) }} +\epsil\delta\lambda\INT{\Bigl(\frac{\Gamma}{\Gamma-1}\ro^{\Gamma}+2\ro^{2}\Bigr)}\\
&\qquad  +\INT{  \Bigl(\frac{1}{\teta} \S(\teta,\nabla\u_N):\nabla\u_N+ \frac{\lambda\delta}{\teta^2}\Bigr)} +\tau \INT{ \Phi(\ln\teta) }\\
&\qquad + \epsil\delta\lambda\INT {  \frac{1}{\teta}(\Gamma\ro^{\Gamma-2}+2)\abs{\nabla\ro}^2 } \leq
C\biggl(1+ \lambda\ins{ \inth{ d(x,\teta)\Theta_0  } }\biggr.\\
\biggl.&\qquad+\tau\INT{ \frac{\Phi(\ln\teta)}{\teta}  }+\lambda\INT{ \bigl(\delta\teta^{-1}+\ro\ef\cdot\u_N  \bigr) }\biggr.\\
\biggl.&\qquad\qquad\qquad\qquad\qquad\qquad\qquad\qquad\qquad+\epsil\lambda\INT{\ro s(\ro,\teta)}\biggr) .
\end{split}\end{equation}
The first three terms on the right-hand side can be treated using their counterparts on the left-hand side, while the last one can be estimated as follows
\begin{multline}
\epsil\INT{\left(\ro \log\left( \frac{\teta^{c_v}}{\ro}\right) + \frac{4a}{3}\teta^3\right) }\leq \frac{\epsil\delta}{4}\INT{\ro^2}\\
+\frac{1}{2}\INT{d(x,\teta)\left(\teta+\frac{\Theta_0}{\teta}\right)}+\INT{\kappa(\teta)\frac{\abs{\nabla\teta}^2}{\teta^2}} + C
\end{multline}
yielding
\begin{equation}\begin{split}
&\INT{\bigl(\kappa(\teta)+\delta \teta^B+\delta \teta^{-1}\bigr) \frac{\abs{\nabla\teta}^{2}}{\teta^2} }+ \tau\INT{ \Phi\carka(\ln\teta)\frac{\left(\det{\teta}\right)^2}{\teta^3} } \\
&\qquad+\lambda\ins{\inth{ d(x,\teta)\left(\teta+\frac{\Theta_0}{\teta}\right) }} +\epsil\delta\lambda\INT{\Bigl(\frac{\Gamma}{\Gamma-1}\ro^{\Gamma}+2\ro^{2}\Bigr)}\\
&\qquad  +\INT{  \Bigl(\frac{1}{\teta} \S(\teta,\nabla\u_N):\nabla\u_N+ \frac{\lambda\delta}{\teta^2}\Bigr)} +\tau \INT{ \Phi(\ln\teta) }\\
&\qquad + \epsil\delta\lambda\INT {  \frac{1}{\teta}(\Gamma\ro^{\Gamma-2}+2)\abs{\nabla\ro}^2 } \leq
C\biggl(1+ \lambda\INT{\abs{\ro\ef\cdot\u_N }  }\biggr) \label{FEfl}.
\end{split}\end{equation}
Finally,
\begin{equation}
\begin{split}
& \INT{\abs{\ro\ef\cdot\u_N }  }\leq C\elnorm{\u_N}{2}{6}\elnorm{\ro}{2}{6/5}\\
&\qquad\leq\frac{1}{2} \INT{  \Bigl(\frac{1}{\teta} \S(\teta,\nabla\u_N):\nabla\u_N\Bigr)}+ C\elnorm{\ro}{2}{6/5}^2\\
&\qquad\leq\frac{1}{2} \INT{  \Bigl(\frac{1}{\teta} \S(\teta,\nabla\u_N):\nabla\u_N\Bigr)}+\frac{\epsil\delta}{4}\INT{\ro^2} + C(\epsil,\delta).
\end{split}\end{equation}

We proved above 
\begin{prop}
Any solution to \refx{fixed} $(\u_N,\teta)$ satisfies:
\begin{equation}\begin{split}
&\INT{\bigl(\kappa(\teta)+\delta \teta^B+\delta \teta^{-1}\bigr) \frac{\abs{\nabla\teta}^{2}}{\teta^2} }+ \tau\INT{ \Phi\carka(\ln\teta)\frac{\left(\det{\teta}\right)^2}{\teta^3} } \\
&\qquad+\lambda\ins{\inth{ d(x,\teta)\left(\teta+\frac{\Theta_0}{\teta}\right) }} +\epsil\delta\lambda\INT{\Bigl(\frac{\Gamma}{\Gamma-1}\ro^{\Gamma}+2\ro^{2}\Bigr)}\\
&\qquad  +\INT{  \Bigl(\frac{1}{\teta} \S(\teta,\nabla\u_N):\nabla\u_N+ \frac{\lambda\delta}{\teta^2}\Bigr)} +\tau \INT{ \Phi(\ln\teta) }\\
&\qquad + \epsil\delta\lambda\INT {  \frac{1}{\teta}(\Gamma\ro^{\Gamma-2}+2)\abs{\nabla\ro}^2 } \leq
C(\epsil,\delta),\label{odhadfl}
\end{split}\end{equation}
where the constant on the right-hand side is independent of $\tau,N,\ksi$ and $\ro$ is given by Proposition \ref{CEfx} as $\ro = \ro(\u_N)$.
\end{prop}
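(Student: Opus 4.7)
The plan is to extract two independent inequalities — an energy-type and an entropy-type — by manipulating \refx{MEfl} and \refx{ENfl}, and to sum them so that the indefinite $\lambda p\sol\u_N$ term cancels.

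First I would test the momentum equation \refx{MEfl} with $\u_N$ itself (admissible since $\u_N\in\LwiN$). The term $\ksi\derit{\u_N}\cdot\u_N$ is a total time derivative and disappears by $L$-periodicity; using \refx{CEf} multiplied by $\lambda\frac{1}{2}|\u_N|^2$ after integration by parts collapses all convective and $\epsil$-diffusion contributions, yielding identity \refx{uuu}. In parallel, I would integrate \refx{ENfl} in space-time, converting $-\sol\nabla\Phi(\ln\teta)$ into the boundary integral $\int d(x,\teta)(\teta-\Theta_0)$ via the Neumann condition, producing \refx{intENfl}. Adding these with the renormalized continuity identity \refx{RCEf} for $\beta=2,\Gamma$ (multiplied by $\delta\lambda$) and absorbing the purely linear $\ro$ and $\ro^{\Gamma-1}$ terms by Young delivers the energy inequality \refx{TEBfl}.

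Second I would pass to the entropy form \refx{ETfl} by dividing \refx{ENfl} by $\teta$ and invoking the Gibbs relation \refx{Gibbs} together with \refx{CEf} to rewrite the convective and time derivatives. Space-time integration produces a bulk dissipation, a boundary dissipation, and crucially the $\epsil$-remainder \refx{epscast}. The decisive technical step is to expand the integrand $(\ro e + p - \ro\teta s)/(\ro\teta)$ by Gibbs: this isolates $-\epsil\int|\nabla\ro|^2\partial_\ro p/(\ro\teta)$, which has the favorable sign since $\partial_\ro p = \gamma\ro^{\gamma-1}+\teta>0$ and moves to the left, and a mixed $\epsil\int\nabla\ro\cdot\nabla\teta(\ldots)$ piece that Young splits into $\frac{\epsil\delta}{4}\int\teta^{-1}(1+\ro^{\Gamma-2})|\nabla\ro|^2$ and $C(\delta)\epsil\int\bigl(|\nabla\teta|^2\teta^{-3}+|\nabla\teta|^2\teta^{-1}\bigr)$. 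Exactly here the hypotheses $\Gamma\ge 2\gamma$ and $\epsil\ll\delta$ are invoked so that both pieces are absorbed by the corresponding left-hand-side terms, giving the entropy inequality \refx{EIfl}.

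Finally I would sum the energy inequality \refx{TEBfl} with the entropy inequality \refx{EIfl}; the $\lambda\int p\sol\u_N$ terms cancel and one reaches \refx{FEfl} after controlling $\tau\int\Phi(\ln\teta)/\teta$ by $\tau\int\Phi(\ln\teta)$ (monotonicity of $\Phi$) and the artificial $\epsil\int\ro s(\ro,\teta)$ by the logarithmic structure of $s$ against $\frac{\epsil\delta}{4}\int\ro^2$, the boundary term $\int d(x,\teta)\teta$, and $\int\kappa(\teta)|\nabla\teta|^2/\teta^2$. The remaining forcing term $|\lambda\int\ro\ef\cdot\u_N|$ is bounded by H\"older and Sobolev against $\frac{1}{2}\int\teta^{-1}\S(\teta,\nabla\u_N):\nabla\u_N$ on the left, plus $\frac{\epsil\delta}{4}\int\ro^2$ using Young. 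The main obstacle is the careful accounting of the $\epsil$-remainders in the entropy identity: this is precisely what forces both $\Gamma\ge 2\gamma$ and $\epsil\ll\delta$, and one must also verify throughout that every constant that finally arises depends only on $\epsil$, $\delta$, and the data, not on $\tau, N, \ksi$.
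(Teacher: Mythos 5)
Your proposal mirrors the paper's proof step by step: the same kinetic-energy identity \refx{uuu} obtained by testing \refx{MEfl} with $\u_N$ and using \refx{CEf}, the same integrated internal-energy balance \refx{intENfl} and renormalized continuity identity \refx{RCEf}, the same energy inequality \refx{TEBfl}, the same Gibbs-based splitting of the $\epsil$-remainder \refx{epscast} with absorption under $\Gamma\geq 2\gamma$ and $\epsil\ll\delta$, and the same final combination and forcing estimate. One small mis-attribution: the pressure-work term $\lambda p\sol\u_N$ cancels already between \refx{uuu} and \refx{intENfl} in forming \refx{TEBfl}, not at the later step when you sum \refx{TEBfl} with the entropy inequality \refx{EIfl} (by then neither inequality contains that term).
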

Recalling that the solution to our problem fulfills \eqref{fixed} with $\lambda =1$, inequality \eqref{odhadfl} holds for our solution with $\lambda =1$.


\section{Limit passages}

\subsection{Limit $\tau\rightarrow 0^+$}
Now, we will perform the limit passage as $\tau\rightarrow 0^+$. From \refx{odhadfl} we have
\begin{multline}
	\elnorm{\nabla\u_N}{2}{2}+\elnorm{\nabla\teta}{2}{2}+\elnorm{\nabla\teta^{B/2}}{2}{2}
	+\elnorm{\teta}{1}{3B}+ \elnor{\teta}{L^4(\es\times\partial \Omega)} \\+\elnorm{\nabla\teta^{-\frac{1}{2}}}{2}{2}+\elnorm{\teta^{-2}}{2}{2}+\elnorm{\ro}{\Gamma}{3\Gamma}\leq C(\epsil,\delta)\label{ETt}
\end{multline}
and from \refx{CEf}
\begin{align}
 \elnorm{\nabla^2\ro}{q}{q}+\elnorm{\det\ro}{q}{q}\leq C(\epsil,\delta),\label{CEt}
\end{align}
with some $q\in(1,2).$ Moreover, since the velocity belongs to a finite-dimensional space, it is obviously relatively compact. 

We can also test the energy equation by $\Phi(\ln \teta)$ and $\det\Phi(\ln \teta)$, in order to get some additional information depending on $\ksi$ 
\begin{multline}
 \tau\elnorm{\derit\Phi(\ln\teta)}{2}{2}^2 + \tau\elnorm{\Phi(\ln\teta)}{2}{2}^2+\elnorm{\nabla\Phi(\ln\teta)}{2}{2}^2\\
 +\ksi\elnorm{\Phi\carka(\ln\teta)\left( \frac{(\det\teta)^2}{\teta}+\teta{(\det\teta)^2} \right)}{1}{1}\leq C(\epsil,\delta, N).
\end{multline}
Further, we have $\Phi\carka(\ln\teta)\teta^{-1}\geq K>0$, which yields using the structure of $\Phi$
\begin{equation}
 \ksi\elnorm{\det\teta}{2}{2} + \elnorm{\nabla\teta}{2}{2}\leq C(\epsil,\delta, N).
\end{equation}
To summarize, we have the strong and pointwise convergence of the temperature, and we can easily pass to the limit as $\tau\rightarrow 0^+,$ to get at least one solution in the class $\ro\in \soblps \cap L^{p}\left(\es,\:\sobdp\right)$,  $\u_N\in \LwiN$,  $\ln\teta\in\soblps\cap L^{p}\left(\es,\:\sobdp\right)$  satisfying \refx{CEf}, \refx{MEf}, \refx{ENf}, \refx{ETf}  with $\tau=0$. Moreover, we have \refx{odhadfl} with $\tau=0$ and $\lambda=1$, and the following bound
\begin{multline}
\INT{\bigl(\kappa(\teta)+\delta \teta^B+\delta \teta^{-1}\bigr) \frac{\abs{\nabla\teta}^{2}}{\teta^2} } + \epsil\delta\INT {  \frac{1}{\teta}(\Gamma\ro^{\Gamma-2}+2)\abs{\nabla\ro}^2 }\\
 +\INT{  \Bigl(\frac{1}{\teta} \S(\teta,\nabla\u_N):\nabla\u_N+ \frac{\delta}{\teta^2}\Bigr)}
 + \ins{\inth{ d(x,\teta)\frac{\Theta_0}{\teta} }}\\\leq
C\left(1+\epsil \INT{ \ro s(\ro,\teta)}\right),\label{odhadsN}
\end{multline}
with $C$ independent of $N$, $\ksi$, $\delta$, and $\epsil$.

\def\rnx{\rntou{3\Nx}}
\def\bl{\vektor{b}^l}
\def\unx{\u_{\Nx}}


\subsection{Limit $N\rightarrow \infty$}

Next we want to pass to infinity with the dimension of the Galerkin approximation of the velocity. This arguments are very similar to the corresponding counterpart in \cite{FeTP} for $\gamma = \frac 53$ and $d$ independent of the temperature. We can proceed mutatis mutandis as in the above mentioned paper and we therefore skip the detailed considerations here. Recall only that the procedure is based on separate limit passages for $N_t$ and $N_x$ to infinity (the dimension in time and space, respectively) and on switching from the approximative internal energy balance to the approximative entropy equality (between these two limit passages) which changes immediately after the second limit passage into the (approximative) entropy inequality.

Hence, following \cite{FeTP} we can show
 
\begin{prop} \label{prop5}
There exists at least one approximate solution $(\ro,\u,\teta)$ in the class $\ro\in W^{1,3/2}(\es\times\Omega)\cap L^{3/2}(\es; \sobdp )$, $\u\in L^{2}(\es;\sobo),$ $\teta\in L^{2}(\es;\sob)$ satisfying
\begin{equation}
\begin{split}
\derit{\ro} + \sol(\ro \u) -\epsil \Delta \ro + \epsil \ro= \epsil m  \text{ in }\es\times\Omega, \label{CEe} \\
\frac{\partial \ro}{\partial \en} = 0 \text{ on }\es\times\partial\Omega,
\end{split}
\end{equation}
for all $\bfi\in\cinf_0(\es\times\Omega;\rtri)$
\begin{multline}
\ins{\inte {  \left(   \ksi\derit{\u} \cdot\bfi+\derit{(\ro\u)}\cdot\bfi-(\ro\u\otimes\u):\nabla\bfi   \right)   }} \\
+ \ins{\inte {  \Bigl( \S(\teta,\nabla\u):\nabla\bfi- \left(p(\ro,\teta) +\delta (\ro^\Gamma + \ro^2) \right)\sol\bfi \Bigr)}}\label{MEe} \\
= 	\ins{\inte { \left( -\epsil\nabla\ro \cdot\nabla\u\bfi + \frac{1}{2}\epsil(m-\ro)\u\cdot \bfi +\ro\ef\cdot\bfi  \right) }} ,
\end{multline}
 for all $\psi\in C^{\infty}(\es\times\overline{\Omega})$
\begin{equation}
\begin{split}
	&\ins{\inte { \Big(- \left(  \ksi\ln\teta+  \ro s(\ro,\teta)\right)\derit{\psi}-\ro s(\ro,\teta)\u\cdot\nabla\psi\Big)}}\\
&\quad\qquad+\INT{  \bigl(\sol(\ro\u) +\det{\ro} \bigr)\frac{\ro e + p - \ro\teta s}{\ro\teta}\psi  }\\&\quad\qquad
+\INT{    \Bigl( \bigl(\kappa(\teta) + \delta\teta^{B}+\delta\teta^{-1}\bigr)\frac{\nabla\teta}{\teta}\cdot{\nabla\psi} \Bigr) }\\&\quad\qquad
   +  \ins{\inth {  \frac{d(\teta-\Theta_0)}{\teta} \psi }}=  \left\langle \sigma , \psi\right\rangle +\delta\INT{\frac{1}{\teta^2}\psi} \\&\quad\qquad\qquad\qquad\qquad
+ \delta\epsil\INT {  \frac{1}{\teta}(\Gamma\ro^{\Gamma-2}+2)\abs{\nabla\ro}^2 \psi },\label{ETe}
\end{split}
\end{equation}
with
\begin{equation}\sigma\geq \frac{\S(\teta,\nabla\u):\nabla\u}{\teta} + \bigl(\kappa(\teta)+\delta \teta^B+\delta \teta^{-1}\bigr) \frac{\abs{\nabla\teta}^{2}}{\teta^2} \geq 0,\end{equation}
and for all $\psi\in C^{\infty}(\es)$
\begin{multline}
 -\ins{ \inte{\biggl(  ( \ksi+\ro)\frac{\abs{\u}^2}{2} +\ksi\teta + \ro e(\ro,\teta) + \delta\bigl(\frac{\ro^\Gamma}{\Gamma-1}+\ro^2\bigr)  \biggr) }\derit{\psi}}  \\
+ \ins{ \inth{d(x,\teta)(\teta-\Theta_0) }\:\psi\: }+\epsil\delta \INTpsi{ \frac{\Gamma\ro^\Gamma}{\Gamma-1}+2\ro^2 }\\
=\INTpsi{ \ro\ef\cdot\u + \epsil\delta \frac{\Gamma}{\Gamma-1}m\ro^{\Gamma-1} + 2\epsil\delta m \ro + \frac{\delta}{\teta} }  . \label{TEe}
\end{multline}
Moreover, we have for $\psi \in C^\infty(S^1)$ non-negative
\begin{equation}
\begin{split}
& \ins{ \inte{\bigl(  \ksi\ln \teta + \ro s(\ro,\teta)\bigr)} \derit{\psi} }+\ins{ \inth{\frac{1}{\teta}d(x,\teta)\Theta_0}\: \psi\: }\\
&\qquad + \INTpsib{\biggl(\bigl(\kappa(\teta)+\delta \teta^B+\delta \teta^{-1}\bigr) \frac{\abs{\nabla\teta}^{2}}{\teta^2} \biggr)} \\
&\qquad + \INTpsib{  \Bigl(\frac{1}{\teta} \S(\teta,\nabla\u):\nabla\u + \frac{\delta}{\teta^2}\Bigr)} \\
&\qquad + \frac{\epsil\delta}{2}\INTpsi {  \frac{1}{\teta}(\Gamma\ro^{\Gamma-2}+2)\abs{\nabla\ro}^2 }  \\
&\qquad\qquad\quad\leq C\biggl(1 + \epsil\INTpsib{ \ro s(\ro,\teta) } + \ins{ \inth{ d(x,\teta)\Theta_0\:  }\: \psi\:}\biggr),\label{ETIe}
\end{split}
\end{equation}
and
\begin{multline}
\INT{\bigl(\kappa(\teta)+\delta \teta^B+\delta \teta^{-1}\bigr) \frac{\abs{\nabla\teta}^{2}}{\teta^2} }
+\ins{\inth{ d(x,\teta)\left(\teta+\frac{\Theta_0}{\teta}\right) }}\\
 +\epsil\delta\INT{\Bigl(\frac{\Gamma\ro^{\Gamma}}{\Gamma-1}+2\ro^{2}\Bigr)}+\INT{  \Bigl(\frac{ \S(\teta,\nabla\u):\nabla\u}{\teta}+ \frac{\delta}{\teta^2}\Bigr)}\\
   + \epsil\delta\INT {  \frac{1}{\teta}(\Gamma\ro^{\Gamma-2}+2)\abs{\nabla\ro}^2 } \leq
C\biggl(1+ \INT{\abs{\ro\ef\cdot\u }}\biggr).\label{ETIe2}
\end{multline}
\end{prop}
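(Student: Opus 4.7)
My plan is to follow the two-step Galerkin limit strategy of \cite{FeTP}: I would first let the temporal dimension $\Nt\to\infty$ with the spatial dimension $\Nx$ frozen, and then let $\Nx\to\infty$. The key structural observation, which forces the final statement to contain an inequality rather than an identity, is that between the two limits one should switch from the approximate internal energy balance \refx{ENf} to the approximate entropy identity \refx{ETf}; the second Galerkin limit then destroys the strong convergence of $\nabla\u_{\Nx}$, so only the lower semicontinuity of $\S(\teta,\nabla\u):\nabla\u/\teta$ and of $\bigl(\kappa(\teta)+\delta\teta^B+\delta\teta^{-1}\bigr)\abs{\nabla\teta}^2/\teta^2$ will be available, producing the nonnegative measure $\sigma$ in \refx{ETe}.

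The starting points will be the $N$-uniform bounds from \refx{odhadfl} with $\tau=0$, from \refx{odhadsN}, and from the parabolic regularity of Proposition \ref{CEfx}: together they give $\u\in L^2(\es;\sobo)$, $\ro\in L^{\Gamma}(\es\times\Omega)\cap W^{1,q}(\es\times\Omega)\cap L^{q}(\es;\sobdp)$ for some $q>1$, $\teta^{B/2},\ln\teta\in L^2(\es;\sob)$, and the boundary trace bound $\teta\in L^4(\es\times\partial\Omega)$. In the first limit $\Nt\to\infty$ the velocity still lies in the fixed finite-dimensional spatial space spanned by $\{\vektor{b}^l\}_{l=1}^{\Nx}$, where all norms are equivalent; I would apply Aubin--Lions to the $\Nx$-dimensional ODE system coming from \refx{MEf} to obtain strong convergence of $\u_{\Nt}$. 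The density will converge strongly thanks to the $\epsil\Delta\ro$ regularization, and the temperature by combining its gradient bounds with bounds on $\det(\ro e(\ro,\teta))$ read off from \refx{ENf}. Consequently, the $\Nt\to\infty$ limit preserves \refx{MEf}, \refx{ENf} and hence also the entropy identity \refx{ETf} as genuine equalities.

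In the second limit $\Nx\to\infty$ I will only have $\u_{\Nx}\weak\u$ in $L^2(\es;\sobo)$. Strong compactness of $\u_{\Nx}$ in $L^2(\es\times\Omega)$ will come from Aubin--Lions using the $\ksi\derit{\u}$ regularization, which provides uniform time-compactness even in the possible vacuum regions; combined with the strong convergence of $\ro_{\Nx}$ (again from $\epsil\Delta\ro$) this suffices to identify the convective term $\ro\u\otimes\u$ and the pressure $p(\ro,\teta)$ in the limit. Passing to the limit in the entropy identity produces the inequality \refx{ETe}--\refx{ETIe}, with $\sigma$ absorbing the defect in the dissipative terms. The total energy balance \refx{TEe} will follow by testing \refx{MEf} by $\u_{\Nx}$, adding the integrated \refx{ENf} and the renormalized \refx{CEf} with $\beta=2$ and $\beta=\Gamma$, and then passing to the limit; estimate \refx{ETIe2} is the direct analogue of \refx{odhadfl} at $\tau=0$ and $\lambda=1$.

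The main obstacle I anticipate is securing time-compactness of the quantities entering the nonlinear terms once $\tau$ has been switched off: the $\ksi$-regularizations of velocity and temperature become indispensable here, and the $\delta\teta^B$ contribution is what lets the boundary integrals containing $d(x,\teta)\teta$ pass to the limit (providing an $L^p$-bound with $p>1$ of $\teta$ on $\es\times\partial\Omega$). A second delicate point is the identification of the limit of the entropy flux $\kappa(\teta)\nabla\teta/\teta$ in the $\Nx$-limit, which relies on the strong convergence of $\teta$, gained by combining the $\delta$-dependent upper bound on $\abs{\nabla\teta}$ with an Aubin--Lions argument based on the time derivative of $\ro s(\ro,\teta)$ read off from \refx{ETf}.
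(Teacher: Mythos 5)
Your proposal follows the same two–step Galerkin strategy as the paper: first $N_t\to\infty$ with $N_x$ frozen, then $N_x\to\infty$; switching from the internal energy balance \refx{ENf} to the entropy identity \refx{ETf} between the two passages; and obtaining the entropy inequality (with the defect measure $\sigma$) from lower semicontinuity when the strong convergence of $\nabla\u_{N_x}$ is lost. The total energy balance \refx{TEe} and the inequalities \refx{ETIe}, \refx{ETIe2} are obtained exactly as the paper does, by testing \refx{MEf} with $\u_N\psi$, integrating \refx{ENf}, adding the renormalized continuity equation for $\beta=2,\Gamma$, and combining with the integrated entropy identity. The only minor deviations are technical: for the temperature time-compactness in the $N_x$-limit the paper reads the dual bound
\[
\norm{\Bigl(\tfrac{\ksi+\ro}{\teta}+\teta^2\Bigr)\derit\teta}_{L^{1}(\es;\,W^{1,5}_0(\Omega)^*)}\leq C(\epsil,\delta)
\]
off the entropy equation \refx{ETf} with $\tau=0$, rather than $\det(\ro e)$ from \refx{ENf} as you suggest (yours also works but requires disentangling $\ro^\gamma$, $\ro\teta$ and $\teta^4$); and for the boundary term $d(x,\teta)\teta$ the paper invokes the $\delta$-independent interpolation bound $\teta\in L^{13/3}(\es\times\partial\Omega)$ from \refx{hranice} rather than the $\delta$-dependent gain from $\delta\teta^B$. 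Both variants are legitimate at this stage since constants may depend on $\epsil,\delta$, so your outline is essentially equivalent to the paper's proof.
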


\subsection{Better regularity of the pressure for $\epsil>0$}
In order to pass to the limit with $\epsil\strong 0+$, we have to establish better estimates of the density. Introducing the modified energy
\begin{multline}
\mathcal{E}=\sup\limits_{t\in\es}{\Edt} = \sup\limits_{t\in\es}\int\limits_\Omega \biggl((\ksi+\ro)\frac{\abs{\u}^2}{2} +\ksi(\teta-\ln\teta) \biggr. \\
\biggl. + H(\ro,\teta) +\delta \Bigl( \frac{\ro^\Gamma}{\Gamma-1}+\ro^2\Bigr) \biggr)\dx
\end{multline}
we now want to show that
\begin{equation}
\mathcal{E} + \INT{ \Big(\ro^{\gamma+1}+ \delta(\ro^{\Gamma+1}+\ro^3)\Big)  } \leq C(\delta).\label{betden}
\end{equation} 
Recalling  \refx{ETIe} and \refx{ETIe2} we obtain by already used mean value argument that
$$ \mathcal{E} \leq C\biggl(1+ \ins{\Edt)}+ \INT{\abs{\ro\ef\cdot\u}} +\epsil\INT{ \ro s(\ro,\teta)} \biggr).\label{MVA}$$
Due to the structure of $\mathcal{E}$, we are able to push the last two terms on the right-hand side of \refx{MVA} to its left-hand side. Thus, it remains to estimate the first integral, especially the terms with powers of density $\ro$. For this purpose, we use a specific test function for the momentum equation \refx{MEe}\footnote{Recall that $\mathcal{B}$ stands for the Bogovskii operator ($\sim\sol^{-1}$)}, namely ($M_0 = m |\Omega|$)
$$\Phib = \mathcal{B}\left[\ro-m\right],$$
yielding
\begin{multline}
\INT{ \bigl(p(\ro,\teta)+\delta (\ro^\Gamma+\ro^2)\bigr)\ro }\\=\INT{ \bigl(p(\ro,\teta)+\delta (\ro^\Gamma+\ro^2)\bigr)M_0 }
+\INT{(\ksi+\ro)\u\cdot\det{\Phib}}\\+\INT{ (\ro\u\otimes\u):\nabla\Phib}+\INT{\S(\teta,\nabla\u):\nabla\Phib}\\
 +\INT{\ro\ef\cdot\Phib}-\epsil\INT{\nabla\ro\cdot\nabla\u\Phib}\\ +\frac{\epsil}{2}\INT{(m-\ro)\u\cdot\Phib}.
\end{multline}
In order to estimate the right-hand side we proceed quite similarly as in the heuristic approach. The details can also be found in \cite{FeTP}. We can prove 
\begin{multline}
\sup\limits_{t\in\es}\int\limits_\Omega \bigg((\ksi+\ro)\frac{\abs{\u}^2}{2} +\ksi(\teta-\ln\teta) + H(\ro,\teta) +\delta \Bigl( \frac{\ro^\Gamma}{\Gamma-1}+\ro^2\Bigr)\bigg) \dx\\
+\INT{\bigl(\kappa(\teta)+\delta \teta^B+\delta \teta^{-1}\bigr) \frac{\abs{\nabla\teta}^{2}}{\teta^2} }
+\ins{\inth{ d(x,\teta)\left(\teta+\frac{\Theta_0}{\teta}\right) }}\\
+\INT{  \biggl(\frac{ \S(\teta,\nabla\u):\nabla\u}{\teta}+ \frac{\delta}{\teta^2}\biggr)}+\INT{ \ro^{\gamma+1}}\leq C(\delta),
\end{multline}
with $C$ in particular independent of $\epsil$.

\def\rod{\ro_\delta}
\def\ud{\u_\delta}
\def\tetad{\teta_\delta}

\subsection{Limit $\epsil\strong 0^+$}
The limit passage for $\epsil\strong 0^+$ uses almost the same arguments as the forthcoming limit for $\delta\strong 0^+$, and except the absence of the strong convergence of the initial densities, and the nonlinear boundary condition for the temperature also the same as in \cite[Section~3.6]{singular}. Therefore, we skip it here, and present only the result of this limit.

We obtain for any $\delta,\ksi>0$ a solution $(\rod,\ud,\tetad)$ satisfying the continuity equation in the renormalized sense
\begin{equation}
\INT{  \left(b(\rod)\derit{\psi} + b(\rod)\ud\cdot\nabla\psi+\bigl( b(\rod)-b\carka(\rod)\rod \bigr) \sol\ud\psi\right) }=0,\label{CEd}
\end{equation}
for any $b\in \cinf[0,\infty), b\carka\in \cinf_c[0,\infty)$, and any $\psi\in \cinf(\es\times\overline{\Omega}).$

The momentum equation \refx{MEe} is satisfied with $\epsil=0,$ id est for all $\bfi\in\cinf_0(\es\times\Omega;\rtri)$ we have
\begin{multline}
\ins{\inte {  \left(   (\ksi+\rod)\ud\cdot\derit\bfi+(\rod\ud\otimes\ud):\nabla\bfi   \right)   }} \\
+ \ins{\inte {  \Bigl( \left(p(\rod,\tetad) +\delta (\rod^\Gamma + \rod^2) \right)\sol\bfi \Bigr)}}\label{MEd} \\
= 	\ins{\inte { \bigl( \S(\tetad,\nabla\ud):\nabla\bfi-\rod\ef\cdot\bfi  \bigr) }} .
\end{multline}

The entropy inequality has the form ($\psi\in C^{\infty}(\es\times\overline{\Omega})$, $\psi\geq 0$)
\begin{equation}
\begin{split}
	&\ins{\inte {  \bigl(  \ksi\ln\tetad+  \rod s(\rod,\tetad)\bigr)\derit{\psi}+\rod s(\rod,\tetad)\ud\cdot\nabla\psi}}\\
	&\quad\quad -\INT{    \Bigl( \bigl(\kappa(\tetad) + \delta\tetad^{B}+\delta\tetad^{-1}\bigr)\frac{\nabla\tetad}{\tetad}\cdot{\nabla\psi} \Bigr) }\\
	&\quad\quad+\INTpsib{	\biggl( \frac{\S(\tetad,\nabla\ud):\nabla\ud}{\tetad} + \bigl(\kappa(\tetad)+\delta \tetad^B+\delta \tetad^{-1}\bigr) \frac{\abs{\nabla\tetad}^{2}}{\tetad^2} \biggr)}\\
	&\quad\quad   \leq  \ins{\inth {  \frac{d(\tetad-\Theta_0)}{\tetad} \psi }} -\delta\INT{\frac{1}{\tetad^2}\psi} ,\label{ETd}
\end{split}
\end{equation}
and the total energy balance $(\psi\in \cinf(\es))$
\begin{multline}
 \ins{ \inte{\biggl( (\ksi+\rod)\frac{\abs{\ud}^2}{2} +\ksi\tetad + \rod e(\rod,\tetad) + \delta\Bigl(\frac{\rod^\Gamma}{\Gamma-1}+\rod^2\Bigr) \biggr) }\derit{\psi}}  \\
= \ins{ \inth{d(x,\tetad)(\tetad-\Theta_0) }\:\psi\: }-\INTpsib{\Bigl( \rod\ef\cdot\ud   + \frac{\delta}{\tetad} \Bigr)}  . \label{TEd}
\end{multline}

To simplify our further considerations, let us introduce a positive Radon measure $\sigma_\delta$ (slightly different from $\sigma$ introduced in Proposition \ref{prop5}) satisfying for all $\psi\in\cinf(\es\times\overline{\Omega})$
\begin{equation}
\begin{split}
&\left\langle \sigma_\delta , \psi\right\rangle =-	\ins{\inte {  \bigl(  \ksi\ln\tetad+  \rod s(\rod,\tetad)\bigr)\derit{\psi}+\rod s(\rod,\tetad)\ud\cdot\nabla\psi}}\\
	&\quad\qquad +\INT{    \Bigl( \bigl(\kappa(\tetad) + \delta\tetad^{B}+\delta\tetad^{-1}\bigr)\frac{\nabla\tetad}{\tetad}\cdot{\nabla\psi} \Bigr) }\\
	&\quad\qquad   - \ins{\inth {  \frac{d(\tetad-\Theta_0)}{\tetad} \psi }},\label{ETd2}
\end{split}
\end{equation}
then formula \refx{ETd} reads ($\psi\geq 0$)
\begin{multline}
 \left\langle \sigma_\delta , \psi\right\rangle \geq \INT{ \frac{\S(\tetad,\nabla\ud):\nabla\ud}{\teta}} 	+\delta\INT{\frac{1}{\tetad^2}\psi\:}\\
+\INT{ \bigl(\kappa(\tetad)+\delta \tetad^B+\delta \tetad^{-1}\bigr) \frac{\abs{\nabla\tetad}^{2}}{\tetad^2} \psi\:}.
\end{multline}
Now, we are able to to set $\ksi=\delta$ and perform the last limit passage.


\subsection{Limit $\delta\strong 0+$}
The limit passage for $\delta\strong 0$ is the crucial step in our considerations. First of all, we need to derive estimates independent of the approximative parameter $\delta$. This will be done in the same manner as in the heuristic approach in Section 2.1; the only additional estimates which we need, are those dependent on $\delta$. Combining \refx{ETd} and \refx{TEd} we get
\begin{multline}
	\elnor{\ud}{L^{2}(\sobo) }^2+ 	\elnorm{\nabla(\tetad^{\fra{3}{2}})}{2}{2}^2 + \elnorm{\nabla(\ln\tetad)}{2}{2}^2 \\+\elnor{\frac{1}{\tetad}}{L^{1}(\es\times\partial\Omega)} +
	\elnor{\tetad}{L^{2}(\es\times\partial\Omega)}^2+\delta\elnorm{\nabla(\tetad^{B/2})}{2}{2}^2\\
+\delta \INT{ \frac{1}{\tetad^2}}		\leq C\bigl(1+ \elnorm{\rod}{2}{6/5}^{6/5}\bigr) .
\end{multline}
Further, we can deduce for the (modified) total energy
\begin{multline*}
\Edt = \int\limits_\Omega \Big(\rod\frac{\abs{\ud}^2}{2} + \rod e(\rod,\tetad)  \\
   +\delta \Bigl(\frac{\abs{\ud}^2}{2} +\frac{1}{2}(\tetad+\abs{\ln\tetad})+\frac{\rod^\Gamma}{\Gamma-1}+\rod^2\Bigr) \Big)\dx
\end{multline*}
that
\begin{equation}
\sup\limits_{t\in\es}{\Edt}\leq C \biggl( 1 + \INT{\rod^\gamma + \delta\left(\rod^2+\rod^\Gamma\right)}\biggr).
\end{equation}
Following closely Section 2.1.2 we finally obtain the estimates of the form
\begin{equation}
\begin{split}
\sup\limits_{t\in\es}{\Edt} &+  \INT{\Big(\rod^{\gamma a} + \delta\left(\rod^{2+\gamma(a-1)}+\rod^{\Gamma+\gamma(a-1)}\right)\Big)}\\
 &+\elnor{\tetad}{L^{13/3}(\es\times\partial\Omega)} +\elnor{\ud}{L^{2}(\sobo) }^2+ 	\elnorm{\nabla(\tetad^{\fra{3}{2}})}{2}{2}^2\\
&+ \elnorm{\nabla(\ln\tetad)}{2}{2}^2+\delta\elnorm{\nabla(\tetad^{\fra{B}{2}})}{2}{2}^2  \leq C.\label{finest}
\end{split}
\end{equation}
with $a>1$ (see \refx{posvala}), provided $d$ satisfies \refx{dep}, and $C$ independent of the approximation parameters.

\def\w{\overline}

From these estimates we obtain the quadruple $(\ro,\u,\teta,\sigma)$ such that
\begin{align*}
\rod\weaks\ro\text{ in }&\elka{\infty}{p} ) \text{ and in } \lebs{p} \text{ for some } \:    p>1 ,\\
\ud\weak \u \text{ in }& L^2(\es;\sob) \:\embed \elka{2}{6},\\
\tetad\weaks\teta \text{ in }& \elka{\infty}{4}\text{, and in } L^2\bigl(\es;\sob\bigr),\\
\sigma_\delta\weaks\sigma \text{ in }&\mathcal{M}(\es\times \overline{\Omega}).
\end{align*}
Additionally, we can deduce that
\begin{align}
& \rod\strong\ro \text{ in } \Cw{5/3},\nonumber \\
& \rod\ud\strong\ro\u \text{ in } \Cw{5/4}, \label{weakcon}\\
& b(\rod)\strong\overline{b(\ro)} \text{ in } \Cw{p}. \nonumber
\end{align}
Here and in what follows we denote the weak limit in $\lebs{1}$ of a sequence $g(\rod,\ud,\tetad)$ by the symbol $\w{g(\ro,\u,\teta)}.$ The main difficulty will be to show for certain nonlinear $g$'s that $g(\ro,\u,\teta) = \w{g(\ro,\u,\teta)}.$ Therefore, we need some tools from the theory of compensated compactness developed by Tartar \cite{Tart79} and Murat \cite{Mura78}, namely the following form of celebrated Div-Curl lemma (see also \cite{singular}).

\subsubsection{Strong convergence of temperature}

\begin{lem}\label{divcurl}
Let $\vektor{U}_\delta\weak \vektor{U}$ in $L^p(\rntou{4},\rntou{4})$,  $\vektor{V}_\delta\weak \vektor{V}$ in $L^q(\rntou{4},\rntou{4}), $ with $$\frac{1}{p}+\frac{1}{q}=\frac{1}{s}<1.$$
Suppose further that $\sol \vektor{V}_\delta$ is precompact in $W^{-1,r}(\rntou{4},\R)$, and that $\curl \vektor{V}_\delta$ is precompact in
 $W^{-1,r}(\rntou{4},\rntou{16})$ for some $r\in(1,\infty).$\footnote{Note that the operators $\sol$ and $\curl$ represent here 
their four-dimensional versions in contrast to the rest of the article where they are used in their usual three dimensional sense.} Then $$\vektor{V}_\delta\cdot \vektor{U}_\delta\weak \vektor{V}\cdot \vektor{U} \text{ in }L^s(\rntou{4}).$$
\end{lem}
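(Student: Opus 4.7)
The statement is a variant of the classical Murat--Tartar div-curl lemma in which the compensated compactness hypotheses are concentrated on the single field $\vektor{V}_\delta$ (both its divergence and its curl are precompact in $W^{-1,r}$). The key observation is that these two pieces of information combine through the Bochner--Weitzenb\"ock identity for 1-forms on $\R^4$,
$$-\Delta \vektor{V}_\delta \;=\; \curl^{*}(\curl \vektor{V}_\delta) \;-\; \nabla(\sol \vektor{V}_\delta),$$
so that $\Delta \vektor{V}_\delta$ is precompact in $W^{-2,r}(\R^4)$. This should be enough to force strong local compactness of $\vektor{V}_\delta$ itself, and then the conclusion reduces to a routine weak-strong pairing against $\vektor{U}_\delta$, for which no compensation hypothesis is needed.

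First, by density of $C_c^\infty(\R^4)$ in $L^{s'}(\R^4)$, it is enough to establish $\int \vektor{V}_\delta \cdot \vektor{U}_\delta \, \phi \,dx \to \int \vektor{V}\cdot \vektor{U}\, \phi \,dx$ for each $\phi\in C_c^\infty(\R^4)$, so the issue is local. I would introduce a cutoff $\chi\in C_c^\infty(\R^4)$ equal to $1$ on $\supp\phi$ and apply the identity above to $\chi \vektor{V}_\delta$. The commutator $\Delta(\chi \vektor{V}_\delta) - \chi\Delta \vektor{V}_\delta$ consists of terms of the form $\nabla\chi\cdot \vektor{V}_\delta$ and $\vektor{V}_\delta\Delta\chi$, which are bounded in $L^q$ uniformly in $\delta$; combined with the precompact piece $\chi\Delta \vektor{V}_\delta \in W^{-2,r}$ this shows that $\Delta(\chi \vektor{V}_\delta)$ is precompact in $W^{-2,r} + L^q \subset W^{-2,\min(q,r)}$, while $\chi \vektor{V}_\delta$ is bounded in $L^q$ with compact support.

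Second, I would invoke elliptic regularity on $\R^4$ in the form of Bessel or Riesz potentials: inverting $\Delta$ gains two derivatives, and interpolating with the uniform $L^q$-bound (and using that $q<\infty$ rules out non-trivial harmonic corrections) shows that $\chi \vektor{V}_\delta$ is precompact in $L^{\tilde q}(\R^4)$ for some $\tilde q>1$ satisfying $1/p + 1/\tilde q \leq 1/s$. After passing to a subsequence we obtain $\vektor{V}_\delta \to \vektor{V}$ strongly in $L^{\tilde q}_{\text{loc}}(\R^4)$. Splitting
$$\int \vektor{V}_\delta \cdot \vektor{U}_\delta \phi \,dx - \int \vektor{V}\cdot \vektor{U} \phi \,dx = \int \phi(\vektor{V}_\delta - \vektor{V})\cdot \vektor{U}_\delta \,dx + \int \phi\, \vektor{V}\cdot (\vektor{U}_\delta - \vektor{U}) \,dx,$$
the first integral is bounded by $\|\phi(\vektor{V}_\delta - \vektor{V})\|_{L^{\tilde q}}\|\vektor{U}_\delta\|_{L^p}$ and vanishes by strong convergence of $\vektor{V}_\delta$, while the second tends to $0$ because $\phi\,\vektor{V}\in L^{p'}(\R^4)$ and $\vektor{U}_\delta\weak \vektor{U}$ in $L^p$.

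I expect the delicate step to be the compactness upgrade in the middle paragraph: one must choose the exponent $\tilde q$ carefully so that the Riesz-potential inversion genuinely sends the compact set in $W^{-2,r}$ to a compact set in $L^{\tilde q}_{\text{loc}}$, and simultaneously absorb the commutator contributions coming from the localisation. Everything else is a standard weak-strong argument, exploiting the freedom in $s'$ given by the strict inequality $1/p+1/q<1$.
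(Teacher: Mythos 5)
The paper does not prove this lemma itself --- it cites Tartar, Murat, and \cite{singular} --- so there is no internal proof to compare against. Your proposal, while internally coherent, in fact exposes what must be a typo in the stated hypotheses rather than proving the lemma that is actually used. As you correctly observe, if \emph{both} $\sol\vektor{V}_\delta$ and $\curl\vektor{V}_\delta$ are precompact in $W^{-1,r}$, then $-\Delta\vektor{V}_\delta=\curl^*\curl\vektor{V}_\delta-\nabla\sol\vektor{V}_\delta$ is precompact in $W^{-2,r}$, and elliptic regularity plus interpolation with the uniform $L^q$ bound force $\vektor{V}_\delta$ to converge strongly in $L^{\tilde q}_{\mathrm{loc}}$; the product with $\vektor{U}_\delta$ then converges by a routine weak--strong pairing and there is no compensation left to exploit. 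But the standard Murat--Tartar div--curl lemma, which is what the cited references prove and what is actually needed, places the two conditions on \emph{different} fields: $\sol\vektor{V}_\delta$ precompact and $\curl\vektor{U}_\delta$ precompact. That is precisely what holds in both applications in the paper: for the strong-convergence-of-temperature argument, $\vektor{V}_\delta$ carries the entropy flux whose four-dimensional divergence is controlled by the entropy production, while $\vektor{U}_\delta=[T_k(\tetad),0,0,0]$ has controlled curl because $\nabla_x T_k(\tetad)$ is bounded in $L^2$; by contrast $\curl\vektor{V}_\delta$ involves the uncontrolled spatial gradient of $\rod s(\rod,\tetad)$ and is certainly not precompact. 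So the hypotheses you read off the statement are not the ones verified in the application.

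For the corrected statement your argument collapses at the Bochner--Weitzenb\"ock step: applied to $\vektor{V}_\delta$ the term $\curl^*\curl\vektor{V}_\delta$ is uncontrolled, applied to $\vektor{U}_\delta$ the term $\nabla\sol\vektor{U}_\delta$ is uncontrolled, so neither factor becomes strongly compact and the weak--strong splitting you build on is unavailable. The genuine proof exploits the cross structure: after localising, decompose (say) $\vektor{U}_\delta$ via a Helmholtz/Hodge or Fourier-multiplier argument into a gradient part plus a divergence-free part; the divergence-free part is strongly compact because its curl is precompact (it is this part, not $\vektor{V}_\delta$, that gains compactness), while the gradient part is paired with $\vektor{V}_\delta$ by integrating by parts so as to hit the precompact $\sol\vektor{V}_\delta$. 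That cancellation between the gradient structure of one field and the divergence control of the other is the content of the compensated-compactness lemma and is entirely absent from your proposal. You should correct the stated hypothesis to $\curl\vektor{U}_\delta$ and prove (or cite) the lemma in that form.
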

Let us apply this proposition to the following 4-dimensional vector fields
\begin{align*}
\vektor{V}_\delta:=&\left[\delta\ln\tetad+\rod s(\rod,\tetad),\: \rod s(\rod,\tetad)\ud - \left(\bigl(\kappa(\tetad) + \frac{\delta}{\teta}+ \delta\teta^B\bigr)\frac{\nabla\tetad}{\tetad}  \right)\right]\\
\vektor{U}_\delta:=&\bigl[T_k(\tetad),\:0,\:0,\:0\bigr],
\end{align*}
where we have introduced a concave smooth cut-off function
$$ T_k(z)=kT\left(\frac{z}{k}\right),\quad T(z)=\begin{cases}z\text{ for }z\in[0,1]\\
                                   2 \text{ for }z\in[3,\infty).
\end{cases} $$
The structural property of $s(\ro,\teta)$ together with the estimates \refx{finest} ensures that $\vektor{V}_\delta$ is uniformly bounded in $\lebs{p}$ for some $p>1.$ 
In addition, we observe that the terms in the entropy inequality \refx{ETd} with $\delta$ vanish as $\delta\strong 0$ in sense of weak convergence in $L^p(S^1\times \Omega)$ ($p>1$).
Further, \refx{ETd}, and the estimates below implies that all assumptions of Lemma \ref{divcurl} are satisfied  for $\vektor{V}_\delta$, $\vektor{U}_\delta$, hence 
$$ \w{T_k(\teta)\ln\left( \fra{\teta^{c_v}}{\ro}\right)} +\frac{4a}{3} \w{T_k(\teta)\teta^3}=\w{T_k(\teta)}\:\w{\ln\left( \fra{\teta^{c_v}}{\ro}\right)} +\frac{4a}{3} \w{T_k(\teta)}\:\w{\teta^3}  $$
Since the logarithm is a monotone function we get (see e.g. \cite[Section~10.11]{singular})
 $$ \w{T_k(\teta)\ln\left( \fra{\teta^{c_v}}{\ro}\right)}\geq\w{T_k(\teta)}\:\w{\ln\left( \fra{\teta^{c_v}}{\ro}\right)}, $$
so
$$  \w{T_k(\teta)\teta^3}\leq \w{T_k(\teta)}\:\w{\teta^3}, $$
and using that also $f(z)=z^3$ is monotone we obtain that
$$ \teta^3=\w{\teta^3} $$
from where we can conclude that
\begin{equation} \teta_\delta\strong\teta\text{ a.e. in }\es\times\Omega.\end{equation}

Concerning the nonlinear boundary term $d(x,\teta)\teta$, we have compactness according to the key estimate \refx{hranic}
\begin{equation}
\elnor{\teta}{L^{13/3}(\partial\Omega)}^{\frac 32}\leq C \elnor{\teta^{\fra{3}{2}}}{\sob}.
\end{equation}
Hence, due to the standard interpolation argument  we can conclude the strong convergence in $L^p(\es\times\partial\Omega)$ for all $p<\frac{13}{3}$.

The last step is to show the pointwise convergence of densities in order to identify the limit in the pressure. 
For this purpose we will use nowadays "classical" arguments exploited by Lions \cite{Lion98} and Feireisl\cite{Feir04} including
 the effective viscous flux identity, commutator lemma for Riesz operators, oscillations defect measure or the limit renormalized continuity equation.
Although we use simply the same arguments as in \cite{FeTP}, we present this part here to make the limit passage in this section as self-contained as possible.

\def\sfit{\sfi(t,x)}
\def\iden{\chi_{\Omega}}

\subsubsection{Effective viscous flux identity}
In order to get the weak compactness identity for effective viscous flux, we subtract two identities. The first one is the limit momentum equation tested by 
$\bfi =  \sfi\nabla\Delta^{-1}[T_k(\rod)\iden]$.\footnote{Here and in what follows $\iden$ denotes the characteristic function of the set $\Omega$.}
 The second one is obtained by testing the momentum equation \refx{MEd} by $\bfi =  \sfi\nabla\Delta^{-1}[\w{T_k(\ro_\delta)}\iden]$, and then taking the limit as $\delta\strong0;$
in both cases $\sfi\in \cinf_c(\es\times\Omega)$ is an arbitrary cutoff function. Denoting by
$$\mathcal{R}[v]=\mathcal{F}^{-1}\left[\frac{\xi_i\xi_j}{\abs{\xi}^2} \mathcal{F}(v)(\xi) \right]$$
 the ``double" Riesz transform, we obtain
\begin{equation}
\begin{split}
  &\lim\limits_{\delta\strong0+}\INT{\sfit\Bigl(p(\rod,\tetad)T_k(\rod)-\S(\tetad,\ud):\mathcal{R}[T_k(\rod)\iden] \Bigr)}\\
&\qquad=\INT{\sfit\left(\w{p(\ro,\teta)}\:\w{T_k(\ro)}-\S(\teta,\u):\mathcal{R}[\w{T_k(\ro)}\iden] \right)}\\
&\quad\qquad+\lim\limits_{\delta\strong0+}\INTb\sfit\Bigl( T_k(\rod)\ud\cdot\mathcal{R}[\rod\ud\iden]\Bigr.\\
&\qqquad\qqquad\qqquad\qqquad\Bigl.-\rod(\ud\otimes\ud): \mathcal{R}[T_k(\rod)\iden]  \Bigr)\dx\dt\\
&\quad\qquad-\INT{\sfit \Bigl(\w{ T_k(\ro)}\u\cdot\mathcal{R}[\ro\u\iden]-\ro(\u\otimes\u): \mathcal{R}[\w{T_k(\ro)}\iden]  \Bigr)}.\label{test}
\end{split}
\end{equation}
Now, we will need two commutators lemma, see e.g. \cite[Section~10.17]{singular}.
\begin{lem}\label{com1}
 Let $\vektor{V}_\delta\weak \vektor{V}$ in $L^p(\rtri,\rtri)$, and $w_\delta\weak w$ in $L^q(\rtri)$, with
$$\frac{1}{p}+\frac{1}{q}=\frac{1}{s}<1.$$
Then $$w_\delta\mathcal{R}[\vektor{V}_\delta]-\mathcal{R}[w_\delta]\vektor{V}_\delta\weak w\mathcal{R}[\vektor{V}]-\mathcal{R}[w]\vektor{V}\text{ in }L^s(\rtri,\rtri).$$
\end{lem}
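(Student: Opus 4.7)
The plan is to proceed by duality and reduce the claim to the compactness of a commutator of a Calderón-Zygmund operator with a smooth multiplier. First I would observe that by the boundedness of $\mathcal{R}$ on $L^p$ and $L^q$ (both in the reflexive range, since $1/p + 1/q < 1$ forces $p, q \in (1, \infty)$) together with Hölder's inequality, the sequence $w_\delta \mathcal{R}[\vektor{V}_\delta] - \mathcal{R}[w_\delta]\vektor{V}_\delta$ is uniformly bounded in $L^s(\mathbb{R}^3;\mathbb{R}^3)$. Hence it admits a weakly convergent subsequence, and by a standard subsequence argument it suffices to identify the weak limit against test functions $\bfi \in \mathcal{D}(\mathbb{R}^3;\mathbb{R}^3)$.

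The Fourier multiplier $\mathcal{R}_{ij}$ has real symmetric symbol $\xi_i\xi_j/|\xi|^2$, so it is formally self-adjoint on $L^2$ and hence, by duality, maps $L^r$ into $L^r$ self-adjointly for all $r \in (1,\infty)$. Using this I would move the operator onto the test function to obtain
\begin{equation*}
\int_{\rtri} \bfi \cdot \bigl(w_\delta \mathcal{R}[\vektor{V}_\delta] - \mathcal{R}[w_\delta]\vektor{V}_\delta\bigr)\,\de{x} \;=\; \int_{\rtri} \vektor{V}_\delta \cdot \mathcal{K}_{\bfi}(w_\delta)\,\de{x},
\end{equation*}
where $\mathcal{K}_{\bfi}(w)_{j} := \mathcal{R}_{ij}[\varphi_i w] - \varphi_i \mathcal{R}_{ij}[w]$ (summation in $i$) is the commutator of multiplication by components of $\bfi$ with the double Riesz transform.

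The heart of the argument is the compactness of $\mathcal{K}_{\bfi} : L^q(\rtri) \to L^q(\rtri)$ for $\bfi \in \cinf_c$. I would derive this from the kernel representation: the integrand of $\mathcal{K}_{\bfi}(w)$ is (principal-value) of $K(x-y)(\varphi_i(y) - \varphi_i(x))w(y)$, where $K$ is the Calderón-Zygmund kernel of $\mathcal{R}$. Since $\bfi$ is Lipschitz with compact support, $|\varphi_i(y) - \varphi_i(x)| \leq C\min(1, |x-y|)$, which renders the commutator kernel only of order $|x-y|^{-2}$ near the diagonal and localized near $\supp \bfi$ at infinity; consequently $\mathcal{K}_{\bfi}$ maps $L^q$ boundedly into a space with one extra derivative locally (a pseudodifferential operator of order $-1$), so Rellich gives local compactness, and the decay at infinity upgrades this to global compactness on $L^q(\rtri)$. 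One may alternatively invoke the classical result of Coifman-Rochberg-Weiss together with Uchiyama's characterization (smooth compactly supported functions belong to $\mathrm{VMO}$).

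Granted the compactness, $w_\delta \weak w$ in $L^q$ yields $\mathcal{K}_{\bfi}(w_\delta) \strong \mathcal{K}_{\bfi}(w)$ strongly in $L^q$. Combined with $\vektor{V}_\delta \weak \vektor{V}$ in $L^p$, the weak-strong pairing gives convergence of $\int \vektor{V}_\delta \cdot \mathcal{K}_{\bfi}(w_\delta)\,\de{x}$ to $\int \vektor{V}\cdot \mathcal{K}_{\bfi}(w)\,\de{x}$, and running the duality step backward identifies this with $\int \bfi\cdot\bigl(w\mathcal{R}[\vektor{V}] - \mathcal{R}[w]\vektor{V}\bigr)\,\de{x}$, as required. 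The main obstacle is the compactness of $\mathcal{K}_{\bfi}$; everything else is bookkeeping with duality and weak convergence.
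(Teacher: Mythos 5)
The paper does not prove this lemma in the text; it cites it from \cite[Section~10.17]{singular}, where the argument is pure compensated compactness. There, for each fixed index $i$ the commutator is rewritten (summation convention) as
\begin{equation*}
\bigl(w_\delta\delta_{ij}-\mathcal{R}_{ij}[w_\delta]\bigr)\,\mathcal{R}_{jk}[V_{\delta,k}]
\;-\;\mathcal{R}_{ij}[w_\delta]\,\bigl(V_{\delta,j}-\mathcal{R}_{jk}[V_{\delta,k}]\bigr),
\end{equation*}
and one observes that $(w_\delta\delta_{ij}-\mathcal{R}_{ij}[w_\delta])_{j}$ and $(V_{\delta,j}-\mathcal{R}_{jk}[V_{\delta,k}])_{j}$ are divergence free while $(\mathcal{R}_{ij}[w_\delta])_{j}$ and $(\mathcal{R}_{jk}[V_{\delta,k}])_{j}$ are gradients, hence curl free; the Div--Curl lemma (essentially the paper's own Lemma~\ref{divcurl}, in its three-dimensional version) is then applied to each of the two products. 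Your route is genuinely different: you dualize, move $\mathcal{R}$ onto the test function $\bfi$ by self-adjointness, and reduce the claim to compactness on $L^q(\rtri)$ of the Calder\'on--Zygmund commutator $[\mathcal{R},\varphi\,\cdot]$ for $\varphi\in\cinf_c$, invoking Coifman--Rochberg--Weiss plus Uchiyama's CMO characterization (or the equivalent pseudodifferential smoothing argument). Both routes are correct; the Div--Curl route stays inside the elementary toolbox the paper already builds and needs nothing from CZ theory beyond $L^r$-boundedness of $\mathcal{R}$, whereas yours is more operator-theoretic and extends unchanged to commutators with zero-order CZ operators that are not symmetric Fourier multipliers.

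One step in your sketch should be made explicit. The concluding ``weak-strong pairing'' requires $\mathcal{K}_{\bfi}(w_\delta)\to\mathcal{K}_{\bfi}(w)$ strongly in $L^{p'}(\rtri)$, the exponent dual to $p$; compactness on $L^q$ gives strong convergence in $L^q$, but since $\tfrac{1}{p}+\tfrac{1}{q}<1$ forces $q>p'$, strong $L^q$-convergence on all of $\R^3$ does not by itself yield strong $L^{p'}$-convergence (the inclusion $L^q\subset L^{p'}$ fails on an infinite-measure domain). The fix is precisely the decay you already use for compactness: since $\bfi$ is compactly supported, $|\mathcal{K}_{\bfi}(w_\delta)(x)|\lesssim(1+|x|)^{-3}\|w_\delta\|_{L^q}$ for $x$ away from $\supp\bfi$, so the tails are uniformly small in every $L^r$ with $r>1$, and on a large ball H\"older downgrades the $L^q$-convergence to $L^{p'}$-convergence. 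Without this remark the final step does not close with the exponents as stated.
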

\begin{lem}\label{com2}
 Let $\vektor{V}\in L^p(\rtri,\rtri)$, and $w \in W^{1,q}(\rtri)$, where $r\in(1,3),\:p\in(1,\infty),$
$$\frac{1}{p}+\frac{1}{q}-\frac{1}{3}<\frac{1}{s}<1.$$
Then $$\elnor{\mathcal{R}[w\vektor{V}]-w\mathcal{R}[\vektor{V}]}{W^{a,s}(\rtri)}\leq C \elnor{w}{W^{1,r}(\rtri)}\elnor{\vektor{V}}{L^{q}(\rtri)},$$
with $\frac{a}{3}=\frac{1}{s}+\frac{1}{3}-\frac{1}{p}-\frac{1}{q}$; $W^{a,s}(\rtri)$ denotes the Sobolev-Slobodetskii space.
\end{lem}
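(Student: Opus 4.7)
The plan is to view $[\mathcal{R},w]\vektor{V} := \mathcal{R}[w\vektor{V}] - w\mathcal{R}[\vektor{V}]$ as a bilinear operator in $(w,\vektor{V})$ and to exploit the fact that its Fourier-side symbol,
$$ m(\xi,\eta) = \frac{(\xi+\eta)_i(\xi+\eta)_j}{|\xi+\eta|^2} - \frac{\eta_i\eta_j}{|\eta|^2}, $$
satisfies $|m(\xi,\eta)| \leq C|\xi|/|\eta|$ in the regime $|\xi|\ll|\eta|$. This one-derivative gain is precisely what makes the commutator strictly smoother than either factor alone. Since $\mathcal{R}$ itself is a zero-order Calder\'on--Zygmund operator and is therefore bounded on every $L^p(\rtri)$ with $p\in(1,\infty)$, the above bound is the Coifman--Meyer statement that $[\mathcal{R},w]$ is a bilinear Fourier multiplier of order $-1$ in the variable paired with $\vektor{V}$, which is the mechanism producing the fractional Sobolev regularity on the left-hand side.

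First, I would perform a Littlewood--Paley decomposition $w = \sum_j \Delta_j w$ and $\vektor{V} = \sum_k \Delta_k \vektor{V}$ and split the commutator into the three standard paraproducts (low-high, high-low, and diagonal). In the low-high piece, where $|\xi|\ll|\eta|\sim 2^k$, a first-order Taylor expansion of the symbol in $\xi$ at the origin produces exactly a factor of $\nabla w$ convolved against a kernel whose $\eta$-symbol satisfies the Mihlin condition; the high-low piece is handled symmetrically by exchanging roles, and the diagonal piece (frequencies comparable) is closed by a direct H\"older bound, since $\mathcal{R}$ sends each dyadic block essentially into itself with a bounded Fourier multiplier. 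Summing the dyadic contributions in the Besov characterisation $W^{a,s}\approx B^{a}_{s,s}$ (valid for non-integer $a$) and applying H\"older's inequality in space then yields a bound of the shape $\|\nabla w\|_{L^{r}}\|\vektor{V}\|_{L^{p}}$, where the exponent identity $a/3 = 1/s + 1/3 - 1/p - 1/q$ is forced by the Sobolev embedding $\dot B^{a}_{s,s}\hookrightarrow L^{s^{*}}$ with $1/s^{*} = 1/s - a/3$.

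The main obstacle is not conceptual but combinatorial: one must verify that the exponent conditions in the hypothesis ($r\in(1,3)$, $p\in(1,\infty)$, $1/p + 1/q - 1/3 < 1/s < 1$) are exactly those needed for every H\"older step to be legitimate and for the dyadic sums over $j,k$ to converge, which is a delicate bookkeeping exercise. An alternative route that avoids the paraproduct machinery is to invoke a $T(1)$-type theorem for bilinear Coifman--Meyer operators of negative order applied directly to the symbol $m(\xi,\eta)$ above; this is the approach followed in \cite[Section~10.17]{singular}, where the proof is carried out in full detail, and we may simply cite it. Only the smoothness and homogeneity of the symbol $\xi_i\xi_j/|\xi|^2$ off the origin, together with the $L^{p}$-boundedness of $\mathcal{R}$, are used in an essential way, so no sharper machinery is required for the present application.
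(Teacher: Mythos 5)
The paper does not actually prove Lemma~\ref{com2}: it is stated as a known commutator estimate and referred to \cite[Section~10.17]{singular}, where the proof appears in full. You recognize this yourself in your last paragraph, so citing the reference (as the paper does) is entirely adequate; the lemma is a standard tool and reproving it here would be out of place.

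Your paraproduct sketch is a reasonable roadmap and captures the correct mechanism: the bilinear symbol $m(\xi,\eta)$ of the commutator vanishes at $\xi=0$, and a first-order Taylor expansion in the low-high regime extracts one derivative from $w$ paired with a degree $-1$ homogeneous Mihlin multiplier in $\eta$; the high-low and diagonal blocks close by H\"older and trivial estimates. Two small caveats worth noting if you were to write this out in full. First, the exponent relation $a/3=1/s+1/3-1/p-1/q$ is \emph{forced by scaling} (both sides must be invariant under $x\mapsto\lambda x$), not by a Sobolev embedding; the embedding $W^{a,s}\hookrightarrow L^{s^*}$ runs in the wrong direction to produce an upper bound on the $W^{a,s}$-norm and merely confirms the dimensional balance a posteriori. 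Second, the statement as printed carries a typo inherited from the reference: the hypotheses declare $\vektor{V}\in L^p$ and $w\in W^{1,q}$ while the conclusion involves $\|w\|_{W^{1,r}}\|\vektor{V}\|_{L^q}$, so the roles of $p,q,r$ are not used consistently; a careful write-up would have to fix the intended labelling (the version in \cite{singular} is internally consistent) before the dyadic bookkeeping can be carried out. Neither caveat affects the validity of the lemma, only the cosmetics of a self-contained proof.
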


From Lemma \ref{com1}, and convergences \refx{weakcon} we obtain 
\begin{multline}
 \INT{\sfit \ud\cdot \Big( T_k(\rod) \mathcal{R}[\rod\ud\iden]-\rod\mathcal{R}[T_k(\rod)\iden]\ud\Big)}\\
\strong\INT{\sfit \u\cdot \left( \w{T_k(\ro)} \mathcal{R}[\ro\u\iden]-\ro\mathcal{R}[\w{T_k(\ro)}\iden]\u\right)},
\end{multline}
hence combining it with \refx{test}
\begin{multline}
  \INT{\sfit\left(\w{p(\ro,\teta)T_k(\ro)}-\w{p(\ro,\teta)}\:\w{T_k(\ro)}\right) }\\
=\INT {\sfit\left(\w{\S(\teta,\u):\mathcal{R}[T_k(\ro)\iden]} -\S(\teta,\u):\mathcal{R}[\w{T_k(\ro)}\iden] \right)}.\label{comu1}
\end{multline}
Further, denoting
\begin{multline}
  \omega(\tetad,\ud)= T_k(\tetad) \Bigl( \mathcal{R}:\left[ \sfit\mu(\tetad)\bigl( \nabla\ud+\nabla\ud^T \bigr) \right]\Bigr.\\
\Bigl.- \sfit \mu(\tetad)  \mathcal{R}:\left[ \nabla\ud+\nabla\ud^T  \right]  \Bigr),
\end{multline}
we get
\begin{multline}
 \INT {\sfit\w{\S(\teta,\u):\mathcal{R}[T_k(\ro)\iden]}}=\lim\limits_{\delta\strong0+}\INT{\omega(\tetad,\ud)}\\
+\lim\limits_{\delta\strong0+}\INT{\sfit\left(\frac{4}{3}\mu(\tetad)+\eta(\tetad)\right)\sol\ud T_k(\rod)  }.\label{comu2}
\end{multline}
According to Lemma \ref{com2}, the vector fields
\begin{equation*}
  \vektor{V}_{\delta}:= [T_k(\rod),T_k(\rod)\ud],\text{ and } \:\vektor{U}_\delta :=[\omega(\tetad,\ud),\:0,\:0,\:0]
\end{equation*}
satisfy the hypotheses of Lemma \ref{divcurl}, hence we obtain
\begin{equation} 
\w{\omega(\teta,\u)}= \omega(\teta,\u).\label{omega}
\end{equation}
From \refx{comu1},\refx{comu2}, and \refx{omega} we finally get the famous effective viscous flux identity
\begin{multline}
\Bigl(\frac{4}{3}\mu(\teta)+\eta(\teta)\Bigr)\left(\w{T_k(\ro)\sol\u}-\w{T_k(\ro)}\sol\u\right)\\
=\w{(\ro^{\gamma}+\ro\teta)T_k(\ro)}-\w{(\ro^{\gamma}+\ro\teta)}\:\w{T_k(\ro)}.\label{EVFI}
\end{multline}

\subsubsection{Oscillations defect measure and limit renormalized continuity equation}

We introduce so called oscillations defect measure (see e.g. \cite[Section~3.7.5]{singular})
$$\mathbf{osc_q}[\rod\strong\ro](\es\times\Omega):= \sup\limits_{k>0}\limsup\limits_{\delta\strong0+} \INT{\abs{T_k(\rod)-T_k(\ro)}^q}$$
and estimate for any $\sfi\in\cinf_c(\es\times\Omega)$, $\sfi\geq0$
\begin{equation*}
 \begin{split}
  &\limsup\limits_{\delta\strong0+} \INT{\sfit\abs{T_k(\rod)-T_k(\ro)}^{\gamma+1}}  \\
  &\qquad\quad\leq \limsup\limits_{\delta\strong0+} \INT{\sfit\bigl( T_k(\rod)-T_k(\ro)\bigr)\bigl( \rod^\gamma-\ro^\gamma\bigr)  } \\
  &\qquad\quad\leq \INT{ \sfit \left(\w{\ro^\gamma T_k(\ro)}-\w{\ro^\gamma}\:\w{ T_k(\ro)}\right)}\\
  &\qquad\quad\leq \INT{ \sfit \left(\w{(\ro^\gamma+\ro\teta) T_k(\ro)}-\w{(\ro^\gamma+\ro\teta)}\:\w{ T_k(\ro)}\right)},
 \end{split}
\end{equation*}
where we have used that $f(z)=z^{\gamma}$ is convex, and $T_k(z)$ concave. The right-hand side of the resulted inequality can be estimated by means of \refx{EVFI} to get
\begin{equation}
\mathbf{osc_{\boldsymbol\gamma+1}}[\rod\strong\ro](\es\times\Omega)\leq C.\label{osc} 
\end{equation}
Having \refx{osc} in hands it is straightforward to get that the limit renormalized continuity equation holds, if we use the following lemma from \cite{singular}.
\begin{lem}
  Let $\Omega\subset \rtri$ be open, and assume that we have a family of solutions $(\rod,\ud)$  to renormalized continuity equation \refx{CEd} such that for some $r>1$
\begin{align}
 \rod\weak\ro          & \text{ in }\lebs{1}\\
\ud\weak\u             & \text{ in }\lebs{r}\\
\nabla\ud\weak\nabla\u & \text{ in }\lebs{r}.
\end{align}
Suppose further that for $\frac{1}{q}<1-\frac{1}{r}$
$$\mathbf{osc_{q}}[\rod\strong\ro](\es\times\Omega)< +\infty. $$
Then the limit functions $(\ro,\u)$ solve the renormalized continuity equation for all $b\in C^1[0,\infty)\cap W^{1,\infty}(0,\infty).$
\end{lem}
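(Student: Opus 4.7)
The approach follows the classical Lions--Feireisl strategy for lifting renormalization through a weak limit. The idea is to work with the bounded truncations $T_k$ at the level of the approximate equation, pass to the limit in $\delta$ first, then renormalize the resulting equation satisfied by $\w{T_k(\ro)}$, and finally let $k \to \infty$, using the finite oscillations defect measure to kill the obstruction that survives the weak limit.

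Concretely, the first step is to apply the renormalized equation \refx{CEd} with $b = T_k$ (admissible, since $T_k \in C^1$ with bounded derivative) to obtain, for each fixed $k > 0$,
\begin{equation*}
\det T_k(\rod) + \sol\bigl(T_k(\rod)\ud\bigr) + \bigl(T_k'(\rod)\rod - T_k(\rod)\bigr)\sol\ud = 0
\end{equation*}
in $\mathcal{D}'(\es\times\Omega)$. Since $T_k(\rod)$ is uniformly bounded, we have $T_k(\rod) \weaks \w{T_k(\ro)}$ in $L^\infty$, and an Aubin--Lions argument combined with $\ud \weak \u$ and $\nabla \ud \weak \nabla \u$ in $\lebs{r}$ identifies the convective term in the limit. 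Passing $\delta \to 0^+$ yields
\begin{equation*}
\det \w{T_k(\ro)} + \sol\bigl(\w{T_k(\ro)}\u\bigr) + \w{\bigl(T_k'(\ro)\ro - T_k(\ro)\bigr)\sol\u} = 0
\end{equation*}
distributionally. Because $\w{T_k(\ro)}$ lies in $L^\infty$ and $\nabla\u \in \lebs{r}$ with $r>1$, this is a linear transport equation with bounded coefficients and an $L^1$-source, which may be renormalized in the DiPerna--Lions sense by any $b \in C^1[0,\infty) \cap W^{1,\infty}(0,\infty)$. Multiplying by $b'(\w{T_k(\ro)})$ gives
\begin{multline*}
\det b(\w{T_k(\ro)}) + \sol\bigl(b(\w{T_k(\ro)})\u\bigr) + \bigl(b'(\w{T_k(\ro)})\w{T_k(\ro)} - b(\w{T_k(\ro)})\bigr)\sol\u \\
+\: b'(\w{T_k(\ro)})\: \w{(T_k'(\ro)\ro - T_k(\ro))\sol\u} = 0.
\end{multline*}

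The last step is to send $k \to \infty$. Since $T_k(z) \to z$ pointwise and $T_k(\rod)$ is bounded in $\lebs{p}$ for some $p>1$ uniformly in $k$, the oscillations defect measure bound implies $\w{T_k(\ro)} \to \ro$ strongly in $L^1$ (hence, by interpolation, in $\lebs{p}$ for every $p < q$), so the first three terms on the left-hand side converge distributionally to those of the renormalized continuity equation for $(\ro,\u)$. The remaining task is to show that the defect term vanishes in the limit; by H\"older's inequality with $1/s := 1/q + 1/r < 1$,
\begin{equation*}
\bigl\|\w{(T_k'(\ro)\ro - T_k(\ro))\sol\u}\bigr\|_{L^s} \leq C\, \bigl(\mathbf{osc_q}[\rod\strong\ro](\es\times\Omega)\bigr)^{1/q}\, \|\sol\u\|_{L^r(\es\times\Omega)} \cdot \alpha_k,
\end{equation*}
where $\alpha_k \to 0$ comes from the pointwise property $T_k'(z)z - T_k(z) \to 0$ combined with the fact that this quantity is supported (up to a bounded constant) in $\{\rod > k\}$, a set of vanishing measure as $k \to \infty$ by the $L^1$-bound on $\rod$. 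The main obstacle is precisely this last convergence: without the gap hypothesis $1/q + 1/r < 1$ one cannot close a H\"older estimate on the product $\bigl(T_k'(\rod)\rod - T_k(\rod)\bigr)\sol\ud$, and without the bound on $\mathbf{osc_q}$ the oscillations of $T_k(\rod)$ around $T_k(\ro)$ survive the weak limit inside the defect. The synergy of these two pieces of information is exactly what powers the Feireisl argument to produce the renormalized equation for $(\ro,\u)$.
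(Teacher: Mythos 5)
The paper does not prove this lemma; it is quoted from Feireisl and Novotn\'y's monograph \cite{singular}. Your outline reproduces the standard Lions--Feireisl scheme used there: apply the renormalization with the cut-offs $T_k$, pass to the weak limit in $\delta$ to obtain a linear transport equation for $\w{T_k(\ro)}$ with an $L^1$ defect source, renormalize this limit equation by a DiPerna--Lions commutator argument, and finally send $k\to\infty$ using the gap hypothesis $1/q<1-1/r$ together with the finite oscillations defect measure to make the defect vanish. The four steps are correctly identified and in the right order, so the overall strategy is sound.

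Two points in the last step are, however, genuinely imprecise and would not close as written. First, the strong $L^1$ convergence $\w{T_k(\ro)}\to\ro$ as $k\to\infty$ does \emph{not} follow from the oscillations defect measure, as you assert; it follows from the equi-integrability of $\{\rod\}$ supplied by the Dunford--Pettis theorem (a consequence of $\rod\weak\ro$ in $L^1$), via
\begin{equation*}
\int_{\es\times\Omega}\bigl(\ro-\w{T_k(\ro)}\bigr)\dx\dt=\lim_{\delta\to0}\int_{\es\times\Omega}\bigl(\rod-T_k(\rod)\bigr)\dx\dt\leq\sup_{\delta}\int_{\{\rod>k\}}\rod\dx\dt\longrightarrow 0 .
\end{equation*}
Second, the H\"older display you write for the defect does not actually produce the decay factor $\alpha_k$; the mechanism is missing. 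What the proof requires is the Lebesgue interpolation
\begin{equation*}
\bigl\|T_k'(\rod)\rod-T_k(\rod)\bigr\|_{L^{r'}(\es\times\Omega)}\leq\bigl\|T_k'(\rod)\rod-T_k(\rod)\bigr\|_{L^1}^{\theta}\:\bigl\|T_k'(\rod)\rod-T_k(\rod)\bigr\|_{L^q}^{1-\theta},
\end{equation*}
with $r'=r/(r-1)$ and $\theta=\frac{q-r'}{r'(q-1)}\in(0,1)$, followed by H\"older's inequality against $\|\sol\ud\|_{L^r}$. The gap hypothesis is precisely what places $r'$ strictly between $1$ and $q$, so that $\theta\in(0,1)$; the $L^1$ factor tends to $0$ uniformly in $\delta$ by the same equi-integrability argument (since $T_k'(z)z-T_k(z)$ vanishes for $z\leq k$ and is bounded there by a multiple of $z$); and the $L^q$ factor is where the oscillations defect measure enters, to keep it under control uniformly in $k$ and $\delta$. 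Collapsing all of this into a single unexplained coefficient $\alpha_k$ obscures the real engine of the argument, which is exactly this interpolation between the vanishing $L^1$ norm and the controlled $L^q$ norm. With these two repairs your sketch matches the proof in \cite{singular}.
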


This result can be extended by means of Lebesgue dominated convergence theorem up to $b\in C^1[0,\infty)$ with suitable growths. Particularly,
\begin{equation}
 \lim\limits_{\delta\strong0}\INT{T_k(\rod)\sol\ud}-\INT{T_k(\ro)\sol\u}=0\label{Ren}
\end{equation}
Putting \refx{EVFI} and \refx{Ren} together we get
\begin{equation*}
\lim\limits_{k\strong\infty}\INT{ \frac{\w{(\ro^{\gamma}+\ro\teta)T_k(\ro)}-\w{(\ro^{\gamma}+\ro\teta)}\:\w{T_k(\ro)}}{\frac{4}{3}\mu(\teta)+\eta(\teta)}\:}=0,
\end{equation*}
yielding the desired conclusion 
$$\rod\strong \ro \text{ a.e. in }\es\times\Omega .$$
This completes the proof of the case with radiation on the boundary.


\section*{Appendix -- a-priori bounds in the case without radiation on the boundary}

We will consider only $\gamma \in (\frac{8}{5},2)$; the cases $\gamma \geq 2$ could be dealt exactly in the same spirit as in \cite{FeTP}.

First we again obtain from the conservation of mass that
\begin{align}
	\ro\in\elka{\infty}{1}. \label{conservok}
\end{align}
Further, we set $\psi\equiv 1$ in the entropy balance equation in order to get
\begin{multline*}
  \ins{\inte   {\left( \abs{\nabla\u}^2   + \frac{(1+\teta^3)\abs{\nabla\teta}^2}{\teta^2}\right)}  } + \ins{\inth {  \frac{d\Theta_0}{\teta}  }} \\ \leq \ins{\inth {  d}}\leq C ,
\end{multline*}
where we have used the structural properties of $\S$ and $\kve$ together with Korn's inequality. Thus,
\begin{gather}
	\|\u\|_{L^2(W^{1,2}_0)} \leq C,\label{velocok}\\
	\|\nabla(\teta^{\frac{3}{2}})\|_{L^2(L^2)} +\|\nabla(\log\teta)\|_{L^2(L^2)} \leq C.\label{temprok}
\end{gather}
By integrating the total energy balance over the whole time period, we deduce  that
\begin{equation}
\ins { \inth { d(\teta-\Theta_0)}} = \ins{ \inte  { \ro \ef\cdot\u} } \leq C\Bigl(1+ \elnorm{\ro}{2}{\fra{6}{5}} \Bigr), \label{forcingok}
\end{equation}
hence according to properties of $d$ and $\Theta_0$
\begin{equation}
\elnor{\teta}{L^{1}(\es\times\partial\Omega)}  \leq C\Bigl(1+ \elnorm{\ro}{2}{\fra{6}{5}} \Bigr),
\end{equation}
which together with \refx{temprok} by virtue of the Poincar\'{e} inequality yields
\begin{equation}\elnorm{\teta}{1}{9}\leq C(1+ \elnorm{\ro}{2}{\fra{6}{5}} ).\label{petkaok} \end{equation}
Our further considerations will show the boundedness of the density $\ro$  in $L^{a\gamma}(\es\times\Omega)$, with $a=\frac{5\gamma-3}{3\gamma }$, therefore we interpolate as follows ($\frac{5}{6} = \alpha + \frac{1-\alpha}{a\gamma}$,  for $\alpha = \frac{5a\gamma-6}{6a\gamma-6}$)
\begin{equation} \elnorm{\ro}{2}{\frac{6}{5}}^2\leq C\left(\elnorm{\ro}{\infty}{1}\right)\ins{ \Bigl(\inte{ \ro^{a\gamma}}\Bigr)^\frac{1}{3a\gamma-3} }.
\label{interpolok} \end{equation}
Thus,
\begin{equation}
\elnorm{\teta}{1}{9}\leq C\biggl[1+\Bigl(\ins{ \bigl(\inte{ \ro^{a\gamma}}\bigr)^\frac{1}{3a\gamma-3} }\Bigr)^{1/2}\biggr].\label{tetaok}
\end{equation}

Denoting the total energy by $E(t)=\inte{\bigl(\frac{1}{2} \ro\abs{\u}^2+\ro e(\ro,\teta) \bigr)}$, we get from its balance, with the usage of the structural properties of the internal energy, and combining \refx{forcingok} and \refx{interpolok} that for all $t_1,\:t_2\in\es$
\begin{align*}
	E(t_1)-E(t_2)\leq C\biggl(1+ \ins{ E(t)} \biggr),\\
	\sup\limits_{t\in\es}E(t)\leq C \biggl(1+ \ins{ E(t)} \biggr).
\end{align*}
From \refx{conservok} and \refx{velocok} we can bound the kinetic energy
\begin{align}
	\ins{ \inte{\frac{1}{2} \ro\abs{\u}^2 } } \leq&\:C\elnorm{\ro}{\infty}{\fra{3}{2}}\leq \epsil \sup\limits_{t\in\es}\inte{\ro e(\ro,\teta)} + C_{\epsil}(M_0), \end{align}
so we have
\begin{multline}	\sup\limits_{t\in\es}E(t)\leq \:C \biggl(1+  \ins{ \inte { \ro e(\ro,\teta) }  }  \biggr)\\ \leq C \biggl(1+  \ins{ \inte { 
\left(\ro^{\gamma}+ \ro\teta +\teta^{4}  \right) }  }  \biggr).
\end{multline}
The first term on the right-hand side will be left as it is. The last term will be estimated as follows
\begin{equation}
\elnor{\teta}{L^{4}}^4\leq \elnor{\teta}{L^{4}}\elnor{\teta}{L^{4}}^3\leq \elnor{\teta}{L^{4}}  \sup\limits_{t\in\es}E^{3/4}(t) ,\label{tetickaok}
\end{equation}
which gives us using  estimates \refx{petkaok} and \refx{interpolok}
 \begin{equation}
 \begin{split}\ins{ \inte { \teta^{4}}  }\leq & \elnorm{\teta}{1}{4}  \sup\limits_{t\in\es}E^{3/4}(t)\leq C \Bigl( 1+ \elnorm{\ro}{2}{\fra{6}{5}}\Bigr)\sup\limits_{t\in\es}E^{3/4}(t) \\
 \leq& \:   C\biggl[1+\Bigl(\ins{ \Big(\inte{ \ro^{a\gamma}}\Big)^\frac{1}{3a\gamma-3} }\Bigr)^{1/2}\biggr] \sup\limits_{t\in\es}E^{3/4}(t).\label{elfourok}
 \end{split}\end{equation}
Further, from \refx{tetaok}
\begin{align}
	\ins{ \inte{ \ro\teta } } \leq& \elnorm{\ro}{\infty}{\gamma}\elnorm{\teta}{1}{\gamma/\gamma-1}\label{smichok}\\ 
	\leq&  C \sup\limits_{t\in\es}E(t)^{1/\gamma}   \biggl[1+\Bigl(\ins{ \Big(\inte{ \ro^{a\gamma}}\Big)^\frac{1}{3a\gamma-3} }\Bigr)^{1/2}\biggr],
\end{align}
 so we get by means of Young`s inequality
\begin{align}
	\sup\limits_{t\in\es}E(t)\leq C \biggl(1+  \elnor{\ro}{L^{\gamma}(\es\times\Omega)}^{\gamma}+\elnor{\ro}{L^{a\gamma}(\es\times\Omega)}^{2a\gamma/3(a\gamma-1)} \biggr).\label{superok}
\end{align}
 In order to finish the estimates, we have to ensure that the power of the last term on the right-hand side is less than $a\gamma$, id est \begin{equation}
 \frac{4a\gamma}{6(a\gamma-1)}<a\gamma,\label{foryoungok}
 \end{equation}
 which gives us the restriction $a\gamma>\frac{5}{3}$, and therefore (as $a = \frac{5\gamma-3}{3\gamma}$)
\begin{equation}
\gamma>\dfrac{8}{5}.
\end{equation}
 
It remains to deduce suitable estimates of density; this will be done by testing the momentum equation by\footnote{Note that in fact  $\gamma(a-1)=\frac{2\gamma-3}{3}$.}
$$\Phib = \B\left[  \ro^{\gamma(a-1)}-\left\{ \ro^{\gamma(a-1)}\right\}_{\Omega}  \right],$$
where $a$ is as above. From the properties of Bogovskii operator and \refx{conservok} it follows that $\Phib\in L^{\infty}\left( \es,W^{\frac{1}{\gamma(a-1)}}(\Omega)\right),$ and $\left\{ \ro^{\gamma(a-1)}\right\}_{\Omega} \in L^{\infty}(\es)$, so we obtain

\begin{multline*}
	\ins{\inte {  \left(   p(\ro,\teta) \ro^{\gamma(a-1)}  \right)   }}\\
	\leq	\ins{\inte {  \left( -\ro\u\cdot\derit{\Phib}-(\ro\u\otimes\u):\nabla\Phib+ \S(\teta,\nabla\u):\nabla\Phib - \ro\ef\cdot\Phib  \right) }}\\
	 +C\ins{\inte {  \left((  \ro^{\gamma}+\ro\teta   +\teta^{4}  ) \left\{\ro^{\gamma(a-1)}\right\}_\Omega \right) }}. 
	\end{multline*}
	
The terms on the left-hand side of the inequality have a positive sign and give the desired estimate of $\ro^{a\gamma}$, if the right-hand side will be estimated. We present here only the most difficult and restrictive terms.

Using similar arguments as those between \refx{tetickaok} and \refx{superok}  we obtain
\begin{multline}
\ins{\inte {  \left(\teta^{4}  \left\{\ro^{\gamma(a-1)}\right\}_\Omega \right) }}+	\ins{\inte { \ro\teta   \left\{\ro^{\gamma(a-1)}\right\}_\Omega  }}\\
	\leq C\left[1+\elnor{\ro}{L^{a\gamma}(\es\times\Omega)}^{a\gamma/6(a\gamma-1)}  \right] \sup\limits_{t\in\es}E^{3/4}(t),
\end{multline}
since \refx{foryoungok}, we are able to push this term to the left-hand side via Young's inequality.
Further, 
\begin{multline}
	\ins{\inte { (\ro\u\otimes\u):\nabla\Phib} }\leq \elnorm{\u}{2}{6}^2\elnorm{\ro\nabla\Phib}{\infty}{{3}/{2}}\\ \leq C\elnorm{\ro}{\infty}{\gamma}\elnorm{\nabla\Phib}{\infty}{3\gamma/(2\gamma-3)}\leq C \sup\limits_{t\in\es}E(t)^{\frac{1}{\gamma}+\frac{2\gamma-3}{3\gamma}}.
\end{multline}
Note that exactly this point determines the value of $a$ $\left(\frac{3\gamma}{2\gamma-3}=\frac{1}{(a-1)}\right)$, and that $\frac{1}{\gamma}+\frac{2\gamma-3}{3\gamma}<1$. For the term with $\det{\Phib}$, we will use the renormalized equation of continuity with $b(\ro) = \ro^{\gamma(a-1)}$; we obtain two terms, one could be estimated similarly as above, the other as follows\footnote{Indeed, $1= \frac{6+\gamma}{6\gamma}+ \frac{6a-5}{6}$ and $\frac{3\cdot2/(2a-1)}{3-2/(2a-1)}=\frac{6}{6a-5}$.}
\begin{equation*}
\begin{split}
&\ins{ \inte { \ro\u\cdot \B \left[ \ro^{\gamma(a-1)}\sol\u -\left\{\ro^{\gamma(a-1)}\sol\u \right\}_\Omega \right] } } 
 \\&\quad\quad\quad\:\leq \elnorm{\ro\u}{2}{\frac{6\gamma}{6+\gamma}} 
\elnorm{ \B \left[ \ro^{\gamma(a-1)}\sol\u -\left\{\ro^{\gamma(a-1)}\sol\u \right\}_\Omega \right]  } {2}{\frac{6}{6a-5}}
\\&\quad\quad\quad\:\leq \elnorm{\ro}{\infty}{\gamma} \elnorm{\u}{2}{6} \elnorm{\ro^{\gamma(a-1)}\sol\u } {2}{\frac{2}{2a-1}}
\\&\quad\quad\quad\:\leq C  \sup\limits_{t\in\es}E(t)^{\frac{1}{\gamma}} \elnorm{\ro^{\gamma(a-1)}}{\infty}{\frac{1}{a-1}}\elnorm{\sol\u}{2}{2}
\\&\quad\quad\quad\:\leq C  \sup\limits_{t\in\es}E(t)^{\frac{1}{\gamma}+a-1},
\end{split}
\end{equation*}
where $\frac{1}{\gamma}+a-1 = \frac{2}{3}.$

The remaining terms could be estimated by analogy, yielding
$$  \ins{\inte {  \ro^{\gamma+\frac{2\gamma-3}{3}}  }} \leq C \sup\limits_{t\in\es}E(t)^{\beta}$$
for some $\beta<1$. This estimate could be plugged into \refx{super} in order to get the desired bound
$$\sup\limits_{t\in\es}E(t)<\infty.$$ The rest of the proof could be done in the same manner as in \cite{FeTP}, and we omit it here.

\textbf{Acknowledgments:} The work of the first author was supported by the grant SVV-2013-267316.
  
\bibliographystyle{amsplain}  

\bibliography{mybib} 

\end{document}